\newif\ifcomments\commentsfalse
\newif\iflong\longtrue
\iflong \usepackage[bibliography=common]{apxproof}
\else   \usepackage[bibliography=common,appendix=strip]{apxproof}
\tikzset{
    symbol/.style={%
        draw=none,
        every to/.append style={%
            edge node={node [sloped, allow upside down, auto=false]{$#1$}}}
    }
}
\newsavebox{\pullback}          
\sbox\pullback{
  \begin{tikzpicture}
    \draw (0,0) -- (1ex,0ex);
    \draw (1ex,0ex) -- (1ex,1ex);
  \end{tikzpicture}
}
\Vv\cate{-Pred}
\Vv\cate{-PMet}
\title{Correspondences between codensity and coupling-based liftings, a practical approach}
\titlerunning{Correspondences between Codensity and Coupling-based Liftings}
\author{Samuel Humeau}{ENS de Lyon, CNRS, LIP, UMR 5668, 69342, Lyon cedex 07, France}{samuel.humeau@ens-lyon.fr}{https://orcid.org/0009-0007-1850-9744}{}
\author{Daniela Petrisan}{CNRS, IRIF, Université Paris Diderot, Paris, France}
{[0000-0001-9712-930X]}{}{}
\author{Jurriaan Rot}{Institute for Computing and Information Sciences, Radboud University, Nijmegen,
the Netherlands}{jurriaan.rot@ru.nl}{https://orcid.org/0000-0002-1404-6232}{}
\authorrunning{S. Humeau, D. Petrisan, and J. Rot}
\keywords{Kantorovich distance,
behavioural metrics,
Kantorovich-Rubinstein duality,
functor liftings}
\newcommand{\Dd}{\mathcal{D}}
\newcommand{\Ll}{\mathcal{L}}
\newcommand{\Pp}{\mathcal{P}}
\newcommand{\Vv}{\mathcal{V}}
\newcommand{\cate}[1]{\text{\normalfont\textbf{#1}}}
\newcommand{\Set}{\cate{Set}}
\newcommand{\ev}{\tau}
\newcommand{\Id}{\mathrm{Id}}
\newcommand{\id}{\mathrm{id}}
\newcommand{\de}{\mathrm{d}_e}
\newcommand{\PT}{\forall}
\newcommand{\SSI}{\Leftrightarrow}
\newcommand{\inc}{\mathrm{inc}}
\newcommand{\sdw}{\mathrm{sdw}}
\newcommand{\op}{\mathrm{op}}
\newcommand{\2}{\mathsf{2}}
\newcommand{\E}{\mathbb{E}}
\newcommand{\R}{\mathbb{R}}
\renewcommand{\hom}[1]{\kl[hom]{[}#1\kl[hom]{]}}
\newcommand{\corresp}[3]{\kl[correspondence]{\left(#1,#2,#3\right)}}
\newcommand{\duality}[3]{\kl[duality]{\left(#1,#2,#3\right)}}
\newcommand\samuel[1]{\marginpar{sh: #1}}
\newcommand{\jr}[1]{\marginpar{Jurriaan: #1}}
\else \newcommand{\samuel}[1]{}
\newcommand{\jr}[1]{}
\begin{document}
\maketitle

\begin{abstract}
  The Kantorovich distance is a widely used metric between probability
  distributions. The Kantorovich-Rubinstein duality states that it can
  be defined in two equivalent ways: as a supremum, based on
  non-expansive functions into $[0,1]$, and as an infimum, based on
  probabilistic couplings.

  Orthogonally, there are categorical generalisations of both
  presentations proposed in the literature, in the form of
  \emph{codensity liftings} and what we refer to as
  \emph{coupling-based liftings}. Both lift endofunctors on the
  category $\Set$ of sets and functions to that of pseudometric
  spaces, and both are parameterised by modalities from coalgebraic
  modal logic.

  A generalisation of the Kantorovich-Rubinstein duality has been more
  nebulous---it is known not to work in some cases. In this paper we
  propose a compositional approach for obtaining such generalised
  dualities for a class of functors, which is closed under coproducts
  and products. Our approach is based on an explicit construction of
  modalities and also applies to and extends known cases such as that
  of the powerset functor.
\end{abstract}

\section{Introduction}

The \emph{Kantorovich} (or \emph{Wasserstein}, or \emph{Monge-Kantorovich}) distance~\cite{K42} is a standard and widely used
metric between probability distributions, studied amongst others in transportation theory~\cite{villani2009optimal}.
In concurrency theory, the Kantorovich distance forms the basis of so-called \emph{behavioural metrics},
which are quantitative generalisations of bisimilarity. They allow a more fine-grained and robust comparison
of system behaviours than classical Boolean-valued behavioural equivalences~\cite{dgjp:metrics-labelled-markov,GiacaloneEA90,bw:behavioural-pseudometric}.

In its discrete version the Kantorovich distance takes as argument a (pseudo-)metric on a set $X$,
and lifts it to a (pseudo-)metric on the set of (finitely supported) distributions $\Dd(X)$.
The celebrated \emph{Kantorovich-Rubinstein duality}~\cite{KR58} states that this distance can be computed
in two ways, yielding the same result: as an infimum indexed by probabilistic couplings, and
as a supremum indexed by non-expansive functions into the $[0,1]$ interval with the Euclidean distance.
This fundamental result is useful for analysis and computation of these distances (e.g.,~\cite{DBLP:journals/lmcs/Bacci0LM17,10.1046/j.1365-2966.2003.07106.x,DBLP:conf/fossacs/Jacobs24,DBLP:journals/siglog/Breugel17,villani2009optimal}). A detailed proof in a broader context than just finitely supported distributions can be found in~\cite{villani2009optimal}.

Orthogonally to this duality, in the last years there have been several proposals to generalise
the Kantorovich distance from distributions to general endofunctors on the category $\Set$ of sets and functions.
The problem then becomes to lift such functors to the category of (pseudo-)metric spaces.
This is particularly useful in the context of a coalgebraic presentation of systems, where the type of 
the system at hand is parametric in the given $\Set$ endofunctor. In particular, this allows a uniform presentation of various
types of probabilistic systems, but also, for instance, metrics on deterministic automata~\cite{bkp:up-to-behavioural-metrics-fibrations-journal}.

There are categorical generalisations of both presentations of the Kantorovich distance: the \emph{coupling-based} approach and 
the one based on non-expansive maps~\cite{bbkk:coalgebraic-behavioral-metrics,DBLP:conf/concur/Bonchi0P18,bkp:up-to-behavioural-metrics-fibrations-journal,HOFMANN2007789}. The latter has recently been established as an instance of the
so-called \emph{codensity liftings}\footnote{\emph{Kantorovich metric} is used interchangeably in the literature to refer to both presentations. We therefore avoid this terminology by consistently referring to the two presentations as \emph{coupling-based} and \emph{codensity} liftings respectively.}~\cite{DBLP:journals/logcom/SprungerKDH21}.
Both approaches are parametric in (sets of) \emph{modalities} or \emph{evaluation maps}, that allow a degree
of freedom in the choice of liftings.

We aim to relate the two approaches, by studying generalisations of the Kantorovich-Rubinstein duality
to a wide class of functors beyond $\Dd$. This problem was first
proposed and studied in~\cite{bbkk:coalgebraic-behavioral-metrics}, where it is shown to hold in some concrete cases
but also to fail in other basic instances, even for very elementary functors such as the diagonal functor $\Delta$ mapping a set $X$ to $X\times X$. In the latter article the authors restrict the study to liftings
parametric in exactly one modality they call an \emph{evaluation map}, which is assumed to be the same for
both codensity and coupling-based liftings. We depart from this by allowing modalities to differ on
both sides. This approach can already be found in~\cite{DBLP:conf/fossacs/GoncharovHNSW23}.
There, it is shown that every coupling-based lifting can be presented as a codensity lifting; but the proof
is non-constructive and yields a large collection of modalities.

In the current paper, we approach the problem of generalised Kantorovich-Rubinstein dualities from a concrete perspective, with an emphasis on compositionality aspects. 
Given a modality as parameter for a coupling-based lifting, we aim to explicitly translate it to modalities for
a codensity lifting, in such a way that the two correspond. More explicitly, we show that the class of such correspondences
between coupling-based and codensity liftings is closed under coproducts and products (and conversely, that every correspondence
for a coproduct of functors can be recovered from correspondences on its constituents). We also investigate correspondences for the identity functor, where there is flexibility in the choice of modalities; and for the powerset functor, extending earlier correspondence results~\cite{bbkk:coalgebraic-behavioral-metrics,DBLP:journals/acs/HofmannN20}.

These correspondence results then allow us to define a concrete \emph{grammar} of functors for which we obtain a correspondence
between coupling-based and codensity liftings. In fact, we obtain several grammars based on different assumptions on the underlying
poset of truth values (assumed to be a quantale). Base cases include the constant functors, distribution functor, powerset functor and the identity functor;
recursive constructions the product and coproduct. These results allow us to obtain or recover induced codensity and coupling-based presentations for a range of examples of behavioural metrics, including metrics on streams, labelled Markov chains, deterministic automata,
and non-deterministic automata.

In the last part of the paper, we investigate the limitations of our
approach through the concrete example of \emph{conditional transition
  systems}~\cite{DBLP:journals/lmcs/Beohar0k0W18,BEOHAR2020102320}. In contrast to the earlier examples, here, our
grammar does give us a metric (and a correspondence result), but it is
not the one considered earlier in the literature. We prove that, in fact, 
the metric from the literature can be expressed with a
codensity lifting but not with a coupling-based lifting.

\section{Codensity and coupling-based liftings: correspondences by example}
\label{sec:overview}

\AP In this section we motivate and describe the problem of correspondences between codensity and coupling-based liftings at the general level of functors,
and our approach in this paper, by means of two examples: the Kantorovich-Rubinstein duality for distributions, and
a similar correspondence for the "shortest-distinguishing-word-distance" on deterministic automata~\cite{bkp:up-to-behavioural-metrics-fibrations-journal}.

\medskip

\emph{Distributions.} We start by recalling the classical Kantorovich distance, in the discrete case. In this paper
we focus on pseudometrics (defined like metrics except that different elements can have distance $0$) as is common in the use of these
types of distances in concurrency theory. For a pseudometric $d \colon X \times X \rightarrow [0,1]$, the Kantorovich distance
is a pseudometric on the set $\Dd(X)$ of distributions on $X$, defined, for $\mu, \nu \in \Dd(X)$ by:
\begin{equation}
	\Dd^{\downarrow}(d)(\mu, \nu) = \inf_{\sigma \in \Omega(\mu, \nu)} \sum_{x,y \in X} d(x,y) \cdot \sigma(x,y)
\end{equation}
where $\Omega(\mu,\nu)$ is the set of couplings between $\mu$ and $\nu$, i.e., probability distributions on $X \times X$ whose marginals
are $\mu$ and $\nu$ respectively. It can equivalently be computed as:
\begin{equation}
	\Dd^{\uparrow}(d)(\mu, \nu) = \sup_{f \colon X \rightarrow [0,1] \text{ n.e.}} \left | \sum_{x \in X} f(x) \cdot \mu(x) - \sum_{x \in X} f(x) \cdot \nu(x) \right|
\end{equation}
where the supremum ranges over ""non-expansive functions"" $f$ into
$[0,1]$ equipped with the Euclidean distance, i.e., such that
$|f(x) - f(y)| \leq d(x,y)$ for all $x,y \in X$. The equality
$\Dd^{\uparrow}(d) = \Dd^{\downarrow}(d)$ is an instance (see~\cite[Particular case 5.16]{villani2009optimal}) of a
general result known as the Kantorovich-Rubinstein duality
(see~\cite[Theorem 5.10]{villani2009optimal}).

\medskip \emph{A different example.} The Kantorovich distance can be
seen as a \emph{lifting} of the distribution functor on the category
$\Set$ of sets and functions to the category $\cate{PMet}$ of
pseudometric spaces and non-expansive functions between them. We
proceed with a quite different example of a similar phenomenon: a
lifting of a functor from $\Set$ to $\cate{PMet}_{\le 1}$ the category
of pseudometric spaces bounded by $1$ presented in two ways, as an
infimum over a variant of couplings and a supremum over non-expansive
functions. This example describes a distance between
\emph{deterministic finite automata (DFA)}. Empty infima
and suprema are defined w.r.t.\ the interval $[0,1]$ where the pseudometrics take their values: $\sup\emptyset=0$ and
$\inf\emptyset=1$.

\footnote{The \emph{knowledge} package
  is used throughout the paper. Most
  of the introduced vocabulary and notations are clickable and the
  associated links brings the reader to their definitions.}

We view DFA over an alphabet $A$ as coalgebras for the functor
$F \colon \Set \rightarrow \Set$, $F(X) = 2 \times X^A$ where
$2 = \{0,1\}$. Coalgebras for this functor are of the form
$\langle l, \delta \rangle \colon X \rightarrow 2 \times X^A$, where
$X$ is the set of states, the output function
$l \colon X \rightarrow 2$ describes which states are accepting
($l(x)=1$), and $\delta \colon X \rightarrow X^A$ is the transition
function. As usual, a word $\omega \in A^*$ is \emph{accepted} in a state
$x$ when, after reading $\omega$ starting on $x$ we end up with an
accepting state.

\AP Given $c\in[0,1)$, the ""shortest-distinguishing-word-distance""
$"d_\sdw"(x,y)$ between states $x,y$ is $0$ if they
recognise the same language and
$c^{|\omega|}$ for $\omega$ a shortest word that belongs exactly to
one of the two languages recognised by $x$ and $y$. As shown in
\cite{bkp:up-to-behavioural-metrics-fibrations-journal}, this distance can be computed recursively as a fixpoint of
the map $\Phi$ on pseudometrics $X\times X\to [0,1]$ given by
\[
  \Phi(d)(x,y)=
  \begin{cases}
    1\text{ if }l(x)\neq l(y)\\
    c\cdot\max_{a\in A}\{d(\delta(x)(a),\delta(y)(a))\}
  \end{cases}
\]

The "shortest-distinguishing-word-distance" can be obtained via a
\emph{lifting} $\overline{F}\colon\cate{PMet}\to\cate{PMet}$ of the
$\Set$ functor $F$, mapping a pseudometric space $(X,d)$ to a
pseudometric space $(FX, \overline{F}d)$, that we will recall below.
The operator $\Phi$ above factors through the pseudometric
$\overline{F}d$. Explicitly, it decomposes as
$\Phi=\overline{F}d\circ \langle l,\delta \rangle$.

In fact, just as in the Kantorovich-Rubinstein duality,
the lifting $\overline{F}$ can be obtained in two different ways. Given a
pseudometric $d\colon X\times X\to[0,1]$ and
$(l_1,\delta_1),(l_2,\delta_2)\in 2\times X^A$, it arises as a
coupling-based lifting $\overline{F}^{\downarrow} d$ by:
\begin{equation}\label{ex:coupling-lifting-dfa}
  \overline{F}^{\downarrow}(d)((l_1,\delta_1),(l_2,\delta_2))=\inf_{(l,\delta)\in F(X\times
    X),~F\pi_i (l,\delta)=(l_i,\delta_i)} \left(\sup_{a\in
      A}c\cdot d(\delta(a))\right)
\end{equation}
Here elements of $2\times (X\times X)^A$ are viewed as a variant of couplings.

Secondly we obtain $\overline{F}$ as a so-called codensity lifting by:
\begin{equation}\label{ex:codensity-lifting-dfa}
  \overline{F}^\uparrow(d)((l_1,\delta_1),(l_2,\delta_2))=\sup_{f\colon X \to [0,1] \text{ n.e.}}
  \{|c\cdot f(\delta_1(a))-c\cdot f(\delta_2(a))|\mid a\in A\} \cup\{|l_1- l_2|\}
\end{equation}
\AP where, as above, the supremum again ranges over non-expansive functions $f$.
Just as for the Kantorovich distance, we again obtain an equality
of functors on $\cate{PMet}$: $\overline{F}^\uparrow = \overline{F}^\downarrow$.

\medskip
\emph{Modalities as parameters.} The abstract coupling-based and codensity liftings
take as input a single \emph{modality} (coupling-based), or a \emph{set of modalities} (codensity), known from the semantics of coalgebraic modal logic.
For the Kantorovich lifting, this modality is the expected value function $\mathbb{E} \colon \Dd([0,1]) \rightarrow [0,1]$, which appears implicitly in both presentations.

In the case of deterministic automata, while a single modality suffices for the
coupling-based lifting, for the codensity lifting we actually use a \emph{set} of modalities (one
for each letter, plus a modality for acceptance of states). This observation is important 
for the investigation in this paper. Instead of aiming for a one-to-one matching of modalities,
which we refer to as a \emph{duality},
we allow a coupling-based lifting specified by a single modality to be matched by a codensity lifting 
specified by a set of modalities;
we refer to this as a \emph{correspondence}.
This allows us to cover examples such as DFA and way more, and circumvent the problem already
observed in~\cite{bbkk:coalgebraic-behavioral-metrics}: even for the product functor
with the modality $\max\colon[0,1]\times[0,1]\to[0,1]$ there is no duality. However, there is a correspondence,
that is, multiple modalities are needed for the codensity lifting to match the coupling-based one.

The idea of allowing multiple modalities for
codensity liftings is not new and can be found already at the origins
of codensity liftings
\cite{DBLP:journals/lmcs/KatsumataSU18,DBLP:journals/logcom/SprungerKDH21,DBLP:conf/concur/KonigM18}.
Here we show how it can be used to generalise Kantorovich-Rubinstein
dualities in a very concrete manner; the aim is to define classes of functors and modalities
for which we can explicitly describe correspondences between associated coupling-based
and codensity liftings.

\medskip \emph{Outline.} We work at the general level of pseudometrics
valued in a quantale; we recall the definitions in
Section~\ref{sec:prelim}. We then recall the abstract notions of
coupling-based and codensity liftings of functors along modalities,
and formulate the problem of correspondences
(Section~\ref{sec:lift_dual_corr}). In
Section~\ref{sec:correspondence_abstract} we prove our main results on
correspondences: we show that the class of functors for which we have
correspondences is closed under products and coproducts, and includes
constant and identity functors, yielding a family of correspondences
for simple polynomial functors including DFA. In
Section~\ref{sec:powerset_distribution} we revisit the duality results
for the finite powerset and finite probability distribution functors.
As a consequence we are able to study in Section~\ref{sec:grammar} a
class of functors for which we have correspondences, generated by a
grammar. The case of conditional transition systems, for which we
prove that there are no correspondences possible, is treated in
Section~\ref{sec:cts}.

\section{Preliminaries: "quantales" and "pseudometrics"}
\label{sec:prelim}

We use "quantales" to model a general notion of truth object,
including both Booleans and real number intervals. In this section we
recall the necessary preliminaries on "quantales", and the associated
general notion of "pseudometrics" valued in a "quantale"; instances
include the standard notion of pseudometrics on real numbers as well
as equivalence relations.

\begin{definition}
\AP A ""quantale"" $\kl[quantale]{\Vv}$ is a
complete lattice with an associative ``and'' operation
$"\otimes"\colon"\Vv"\times"\Vv"\to"\Vv"$ which is distributive over
arbitrary joins. We only consider "quantales" that are \emph{commutative}, i.e., the
operation $"\otimes"$ is commutative, \emph{unital}, meaning $"\otimes"$ admits
a unit element, and \emph{affine}, which means that
the top element $\top$ is the unit of $"\otimes"$: $x "\otimes" \top = x = \top "\otimes" x$ for all $x \in "\Vv"$.
\end{definition}

\AP Given a "quantale" $"\Vv"$, there is an operation
$""[-,-]""\colon"\Vv"\times"\Vv"\to"\Vv"$ that is characterised by
$x"\otimes" y\le z\SSI x\le\hom{y,z}$. It is simply defined by
$\hom{y,z}=\bigvee\{x\in"\Vv"\mid x"\otimes" y\le z\}$.

\begin{example}
  \label{ex:quantales}
  \begin{itemize}
  \item \AP Any complete Boolean algebra is a "quantale" with
    $"\otimes"=\wedge$; in particular, the usual Boolean algebra
    $\2=\{\bot,\top\}$ with $\bot \leq \top$.
    In this case, $\hom{x,y}=\top$ iff $x\le y$.
  \item \AP Any interval $[0,M]$ with $M\in (0,\infty]$, given with
    \emph{reversed order} and $x"\otimes"y=\min(x+y,M)$ the \emph{truncated
      sum} is a "quantale". Here the top element is given by
    $\top = 0$, and the bottom element by $\bot = M$.
    For this "quantale", we have $\hom{x,y}=\max\{y-x,0\}$. We write
    $\overline{\R}$ for the case that $M = \infty$, i.e., the
    non-negative real numbers extended with a top element.
  \end{itemize}
\end{example}

\AP We use "quantales" as an abstract notion of truth object; accordingly, we define
a ""$\Vv$-predicate"" on a set $X$ to be a map $p\colon X\to "\Vv"$.
Of particular interest are "$\Vv$-pseudometrics". These are predicates on $X \times X$
that are reflexive, symmetric and transitive in a suitable sense that can be expressed
at the general level of quantales.

\begin{definition}
\AP A ""$\Vv$-pseudometric"" on a set $X$ is a map
$d\colon X\times X\to"\Vv"$ which is:
\begin{itemize}
\item \emph{reflexive}: $d(x,x)=\top$ for all $x\in X$,
\item \emph{symmetric}: $d(x,y)=d(y,x)$ for all $x,y\in X$,
\item \emph{transitive}:
  $\bigvee_{z\in X}d(x,z)"\otimes" d(z,y)\le d(x,y)$
  for all $x,y \in X$.
\end{itemize}
\end{definition}

\begin{example}
  \begin{itemize}
  \item $\2$-pseudometrics are equivalence relations.
  \item $\overline{\R}$-pseudometrics are the ``usual'' pseudometrics, that is, maps
  $d \colon X \times X \rightarrow \overline{\R}$ such that $d(x,x) = 0$,
  $d(x,y)=d(y,x)$ and $d(x,y) \leq d(x,z) + d(z,y)$ for all $x,y,z \in \overline{\R}$. To see
  why we obtain the triangle inequality from transitivity, recall that the order on the "quantale" is reversed w.r.t.\ the usual
  order on real numbers, and that $"\otimes" = +$.
  \end{itemize}
\end{example}

The order on a "quantale" $"\Vv"$ is extended
pointwise to functions:
\[
  \PT f,g\colon X\to"\Vv",~f\le g\SSI(\PT x\in X,~f(x)\le g(x))
\]

\AP Given two "$\Vv$-pseudometrics" $d_X$ and $d_Y$ on sets $X$
and $Y$ respectively, a map $f\colon X\to Y$ is a ""($\Vv$-pseudometric) morphism"" from $d_X$ to $d_Y$ if
$d_X\le d_Y\circ(f\times f)$.
We write $"\Vv\cate{-PMet}"$ for the category
whose objects are "$\Vv$-pseudometrics" and arrows "morphisms" between them.

\begin{remark}\label{rem:non-expansive}
 	Because of the reversal of the order on $\overline{\R}$, "morphisms" of
	$\overline{\R}$-pseudometrics are "non-expansive" maps.
	To avoid confusion with the "quantale" definition, we refrain from using the word
	\emph{"non-expansive"} altogether, and replace it by \emph{"($\Vv$-pseudometric) morphism"} instead.
\end{remark}

\AP The following canonical "$\Vv$-pseudometric" is essential for
the notion of "codensity lifting".
\begin{definition}
The ""Euclidean pseudometric"" $"\de"\colon"\Vv"\times"\Vv"\to"\Vv"$
is defined as follows:
\[
  "\de"(x,y)=\hom{x,y}\wedge\hom{y,x}
\]
\end{definition}
\begin{example}
  \begin{itemize}
  \item For the "quantale" $\2$, the "Euclidean pseudometric" $"\de" \colon \2 \times \2 \rightarrow \2$ sends equal elements
    to $\top$ and different ones to $\bot$.
  \item For the "quantale" $\overline{\R}$, the "Euclidean pseudometric" $"\de" \colon \overline{\R} \times \overline{\R} \rightarrow \overline{\R}$
  instantiates to the usual Euclidean distance, i.e.,
    $"\de"(x,y)=|y-x|$.
  \end{itemize}
\end{example}

\section{"Liftings", "dualities", and "correspondences"}
\label{sec:lift_dual_corr}

In this section we recall the definitions of "coupling-based" and "codensity liftings"
from the literature (Section~\ref{sec:liftings}) and define the problem of their "correspondence". This is followed by a few technical tools that we use in
the proofs of "correspondence" (Section~\ref{sec:kr-tools}). We start with the notion of ("well-behaved") "modality" (Section~\ref{sec:modalities}),
used in these liftings.

\subsection{"Well-behaved" "modalities"}\label{sec:modalities}

\AP Given a $\cate{Set}$ endofunctor $F$,
a ""modality"" for $F$ is a function $"\ev"\colon F"\Vv"\to"\Vv"$.
Modalities are standard in the semantics of coalgebraic modal logic~\cite{DBLP:conf/fossacs/Schroder05}.
Note that here we assume $"\Vv"$ to be a "quantale", to have a suitable notion of "pseudometrics".

\AP In order for "coupling-based liftings" to be well-defined some
particular conditions on the functor and the associated "modalities" are needed.
The underlying functor $F$ is assumed to preserve weak pullbacks.
In this context, we say $"\ev"$ is
""well-behaved""~\cite{bbkk:coalgebraic-behavioral-metrics} when:
\begin{itemize}
\item \AP it is ""monotone"", meaning that for all "$\Vv$-predicates"
  $p\le q$ we have $"\ev"\circ F(p)\le"\ev"\circ F(q)$,
\item for all "$\Vv$-predicates" $p$ and $q$,
  $"\ev"\circ F(p"\otimes" q) \ge ("\ev"\circ Fp)"\otimes"("\ev"\circ Fq)$,
\item with $i\colon\{\top\}\hookrightarrow"\Vv"$ the inclusion map,
  $Fi (F\{\top\})="\ev"^{-1}(\top)$.
\end{itemize}

\begin{example}\label{ex:well-behaved}
  \begin{itemize}
  \item On the "identity functor", with the "quantale" $\overline{\R}$,
    a "modality" is just a map $"\ev" \colon "\Vv" \to "\Vv"$. It is "well-behaved"
	if and only if it is monotone, subadditive (i.e.,
	$"\ev"(r+s) \leq "\ev"(r) + "\ev"(s)$), and satisfies $"\ev"(0) = 0$.
  \item \AP For the ""finite powerset functor"" $"\Pp"$ mapping a set
    $X$ to the set of its finite subsets $"\Pp"(X)$, with the same
    "quantale" $\overline{\R}$, the maximum function is "well-behaved".
  \item \AP For the ""finite distribution functor"" $"\Dd"$ mapping a
    set $X$ to the set of its finitely supported probability
    distributions $"\Dd"(X)$, with the "quantale" $[0,1]$, the map
    $""\E"" \colon \Dd([0,1]) \rightarrow [0,1]$ giving the expected value of a probability distribution is
    "well-behaved".
  \end{itemize}
\end{example}

In the rest of this paper we consider "constant", "identity",
"finite powerset", "finite probability distribution functors", and
combine them using products and coproducts. All
these functors preserve weak pullbacks (see~\cite[Propositions 4.2.6 and
4.2.10]{DBLP:books/cu/J2016} for example), ensuring we can consider
"well-behaved" "modalities" for them.

"Well-behaved" "modalities" can be constructed from
existing ones. The case of composition is a generalisation to
"quantales" of a particular case of~\cite[Theorem
7.2]{bbkk:coalgebraic-behavioral-metrics}.
\begin{propositionrep}\label{prop:wb_evmap_stability}
  Given three arbitrary "well-behaved" "modalities"
  $"\ev","\ev"'\colon F"\Vv"\to"\Vv"$ and
  $"\ev"_{"\Id"}\colon "\Vv"\to"\Vv"$, 
  the following "modalities" are again "well-behaved":
  $
    "\ev"_{"\Id"}\circ"\ev"$, $"\ev" "\otimes" "\ev"'$ and $"\ev"\wedge"\ev"'$. 
\end{propositionrep}
\begin{proof}
  Let $p,q$ be "$\Vv$-predicates" such that $p\le q$:
  \begin{itemize}
  \item "Monotonicity": by hypothesis
    \[
      "\ev"_{"\Id"}\circ"\Id"("\ev"\circ Fp)\le"\ev"_{"\Id"}\circ"\Id"("\ev"\circ
      Fq)
    \]
    directly giving "monotonicity" for $"\ev"_{"\Id"}\circ"\ev"$.

    "Monotonicity" of $"\ev"~\op~"\ev"'$ for
    $\op\in\{"\otimes",\bigwedge\}$ is given by that of $"\ev"$,
    $"\ev"'$, and $\op$ (in both its arguments).
  \item Second condition: as $"\ev"$ and $"\ev"_{"\Id"}$ are both "well-behaved"
    \begin{align*}
      ("\ev"_{"\Id"}\circ"\ev")\circ F(p"\otimes" q)
      &="\ev"_{"\Id"}("\ev"\circ F(p"\otimes" q))\\
      &\ge"\ev"_{"\Id"}(("\ev"\circ Fp)"\otimes"("\ev"\circ Fq))\\
      &="\ev"_{"\Id"}\circ"\Id"(("\ev"\circ Fp)"\otimes"("\ev"\circ Fq))\\
      &\ge("\ev"_{"\Id"}\circ"\ev")Fp"\otimes"("\ev"_{"\Id"}\circ"\ev")Fq
    \end{align*}

    By hypothesis both $"\ev"$ and $"\ev"'$ are "monotone". As
    $"\otimes"$ is monotone in both its arguments:
    \[
      ("\ev"\circ F(p"\otimes" q))"\otimes"("\ev"'\circ F(p"\otimes"
      q))\ge(("\ev"\circ Fp)"\otimes"("\ev"\circ
      Fq))"\otimes"(("\ev"'\circ Fp)"\otimes"("\ev"'\circ Fq))
    \]
    and the associativity and commutativity of $"\otimes"$ allow us to
    conclude.

    Finally, for $"\ev"\bigwedge"\ev"'$:
    \begin{align*}
      ("\ev"\bigwedge"\ev"')\circ F(p"\otimes" q)
      &=("\ev"\circ F(p"\otimes" q))\bigwedge("\ev"'\circ F(p"\otimes" q))\\
      &\ge(("\ev"\circ Fp)"\otimes"("\ev"\circ Fq))\bigwedge
        (("\ev"'\circ Fp)"\otimes"("\ev"'\circ Fq))\\
      &\ge(("\ev"\circ Fp)\bigwedge"\ev"'\circ Fp)"\otimes"
        (("\ev"\circ Fq)\bigwedge"\ev"'\circ Fq)
    \end{align*}
    where the last inequality is given by monoticity of $"\otimes"$.
  \item Last condition:
    \begin{align*}
      ("\ev"_{"\Id"}\circ"\ev")^{-1}(\top)
      &= "\ev"^{-1}("\ev"_{"\Id"}^{-1}(\top))\\
      &= "\ev"^{-1}(\top)\text{ (as $"\ev"_{"\Id"}$ is "well-behaved")}\\
      &= Fi(F\{\top\})\text{ (as $"\ev"$ is "well-behaved")}
    \end{align*}

    For $"\ev" "\otimes" "\ev"'$, note that for all $x,y$,
    $x=x"\otimes"\top\ge x"\otimes" y$, so that $x"\otimes" y=\top$ if and
    only if $x=\top=y$. The same holds replacing $"\otimes"$ by $\bigwedge$
    so that what we do here proves the condition for
    $"\ev"\bigwedge"\ev"'$ too. We get
    $("\ev" "\otimes" "\ev"')^{-1}(\top)=
    ("\ev"^{-1}(\top))\cap("\ev"'^{-1}(\top))=Fi(F\{\top\})$.
  \end{itemize}
\end{proof}

\subsection{"Liftings" and "correspondences"}\label{sec:liftings}

\AP Given a functor $P\colon\cate{C}\to\cate{D}$, a ""lifting"" of a
$\cate{D}$-functor $F\colon\cate{D}\to\cate{D}$ from $\cate{D}$ to
$\cate{C}$ is a functor $\overline{F}\colon\cate{C}\to\cate{C}$ such
that $P\circ\overline{F}=F\circ P$. Here we only consider the case
when $\cate{D}=\Set$ and $\cate{C}="\Vv\cate{-PMet}"$, with $P$ the
forgetful functor sending a "$\Vv$-pseudometric" of type
$X\times X\to"\Vv"$ to its underlying set $X$ and acting as the identity on
arrows. We define "coupling-based liftings", "codensity liftings", and
the associated notion of "correspondence" that we study here.

\AP  Given
$t_1,t_2\in FX$, a ""coupling"" of $t_1$ and $t_2$ is an element
$t\in F(X\times X)$ such that $F\pi_i(t)=t_i$. The set of "couplings"
of $t_1$ and $t_2$ is denoted by $"\Omega"(t_1,t_2)$.
In particular, if $F = "\Dd"$ is the "distribution functor" on $\Set$, these are precisely
probabilistic couplings: joint distributions on $X \times X$ whose marginals
coincide with given distributions $t_1, t_2$. The following "lifting"
arises from~\cite{HOFMANN2007789}. See also~\cite{bkp:up-to-behavioural-metrics-fibrations-journal}. It is not referred to in the literature as ``"coupling-based"''; we use this terminology
to distinguish it from the "codensity lifting".

\begin{definition}\label{def:coupling-based-lifting}
  \AP Let $F \colon \Set \rightarrow \Set$ be a functor which preserves weak pullbacks, and let
  $"\ev"$ be a "well-behaved" "modality" for $F$.
  The ""coupling-based
  lifting"" of $F$ to $"\Vv\cate{-PMet}"$ is given by, with
  $d\in"\Vv\cate{-PMet}"$ and $t_1,t_2\in FX$:
  \[
    \kl[coupling-based]{F^\downarrow_{"\tau"}}(d)(t_1,t_2)=\bigvee_{t\in "\Omega"(t_1,t_2)}"\ev"\circ Fd(t)
  \]
\end{definition}

\begin{example}
In Section~\ref{sec:overview}, we studied a coupling-based "lifting" for DFA, which yields the
"shortest-distinguishing-word-distance". To see this as an instance of Definition~\ref{def:coupling-based-lifting},
the "quantale" is $"\Vv"=[0,1]$ as introduced in Section~\ref{sec:prelim}, the functor $F$ is the one mapping a set $X$ to
$2\times X^A$. The "well-behaved" "modality" takes as argument an
element $(l,\delta)\in 2\times "\Vv"^A$ and returns
$\bigwedge_{a\in A}c\cdot\delta(a)$ for some $c\in[0,1)$. The resulting "lifting"
is precisely the one given in Equation~\eqref{ex:coupling-lifting-dfa}.
\end{example}

The \emph{"codensity lifting"} is defined in a very general setting for monads in~\cite{DBLP:journals/lmcs/KatsumataSU18}. Here, we use an instance,
which appears in~\cite{DBLP:journals/logcom/SprungerKDH21,DBLP:conf/concur/KonigM18} and, for a single "modality", in~\cite{bbkk:coalgebraic-behavioral-metrics}.

\begin{definition}
  \AP Let $F \colon \Set \rightarrow \Set$, and
  let $\Gamma$ be a family of "modalities" for $F$.
  The ""codensity lifting"" of $F$ along $\Gamma$ to
  $"\Vv\cate{-PMet}"$ is defined by, with $d\in"\Vv\cate{-PMet}"$
  and $t_1,t_2\in FX$:
  \[
    \kl[codensity]{F^\uparrow_\Gamma}(d)(t_1,t_2)=\bigwedge_{\tau\in\Gamma,~f\colon d\to"\de"}
    "\de"("\ev"\circ Ff(t_1),"\ev"\circ Ff(t_2))
  \]
\end{definition}
Note that the maps $f\colon d\to"\de"$ in the definition of the
"codensity lifting" are "$\Vv$-pseudometric morphisms", so that, for
instance, in $\overline{\R}$, they correspond to "non-expansive" maps
(cf.~Remark~\ref{rem:non-expansive}).

\begin{example}
  In the example of Section~\ref{sec:overview} the "codensity lifting"
  has one "modality" mapping $(l,\delta)\in 2\times"\Vv"^A$ to $l$, as
  well as one "modality" $"\tau_a"$ for each letter $a\in A$ given by
  $"\tau_a"(l,\delta)=c\cdot\delta(a)$. The resulting "codensity
  lifting" coincides with the "lifting" given in
  Equation~\eqref{ex:codensity-lifting-dfa}.
\end{example}

Note that the "coupling-based liftings" are only allowed
to have one "modality", whereas the "codensity" ones may have multiple
"modalities". Informally, one of the reasons "coupling-based liftings"
do not require multiple "modalities" can be seen by looking again at
the functor for DFA. Recall that it maps a set $X$ to $\2\times X^A$
for $\2=\{0,1\}$ and $A$ an alphabet. Considering "couplings" forces
comparisons using $d$ to be done separately on each letter. "Codensity
liftings" instead use one "modality" per letter to ensure a similar
condition. On a higher level of abstraction, the multiple "modalities"
of "codensity liftings" relate to the expressivity of some modal logics
(see, e.g.,~\cite{kkkrh:expressivity-quantitative-modal-logics}). On the
other hand, the unique "modality" for "coupling-based liftings" has
often been called an evaluation function (see~\cite{bbkk:coalgebraic-behavioral-metrics}). Having only one way of
evaluating something seems natural in a given evaluation context.

In the rest of this paper we study possible equalities between the
"coupling-based" and "codensity liftings". In general, we allow
a "well-behaved" "modality" for the "coupling-based liftings" to be matched
by a set of (different) "modalities" for the "codensity lifting". In case
the "modalities" are the same, we call this "Kantorovich-Rubinstein duality".
\begin{definition}\label{def:correspondence}
  \AP Let $F\colon\Set\to\Set$ be a weak-pullback preserving functor,
  $"\ev"\colon F"\Vv"\to"\Vv"$ a "well-behaved" "modality", and
  $\Gamma$ a set of "modalities" for $F$. A ""correspondence"" is a
  triple of the form $\corresp{F}{"\ev"}{\Gamma}$ such that the
  associated "coupling-based" and "codensity" "liftings" coincide,
  as in:
  \[
    \kl[codensity]{F^\downarrow_{"\ev"}}=\kl[codensity]{F^\uparrow_\Gamma}
  \]
  If $\Gamma = \{"\ev"\}$ then we refer to this as ""Kantorovich-Rubinstein duality"", or simply "duality".
\end{definition}
\AP Two "correspondences" $\corresp{F}{"\ev_1"}{\Gamma_1}$ and
$\corresp{F}{"\ev_2"}{\Gamma_2}$ for the same functor are called
""equivalent"" when they yield the same "liftings":
\[
  \kl[coupling-based]{F^\downarrow_{"\ev_1"}}=\kl[codensity]{F^\uparrow_{\Gamma_1}}=
  \kl[codensity]{F^\uparrow_{\Gamma_2}}=\kl[coupling-based]{F^\downarrow_{"\ev_2"}}
\]
This is denoted by
$\corresp{F}{"\ev_1"}{\Gamma_1}"\sim"\corresp{F}{"\ev_2"}{\Gamma_2}$.

When the associated "liftings" are not equal but one ""inequality""
holds nonetheless such as
\[
  \kl[coupling-based]{F^\downarrow_{"\ev_1"}}=\kl[codensity]{F^\uparrow_{\Gamma_1}}\le
  \kl[codensity]{F^\uparrow_{\Gamma_2}}=\kl[coupling-based]{F^\downarrow_{"\ev_2"}}
\]
we write
$\corresp{F}{"\ev_1"}{\Gamma_1}"\le"\corresp{F}{"\ev_2"}{\Gamma_2}$.

\subsection{"Kantorovich-Rubinstein dualities" and tools to get them}\label{sec:kr-tools}

\AP The focus of this paper is on general "correspondences" between "coupling-based" and "codensity liftings",
but in some cases there is the stronger property of "Kantorovich-Rubinstein
duality", meaning "correspondences" like
$\duality{F}{"\ev"}{\{"\ev"\}}$. In the remainder of this section we consider a few technical
definitions and results that are useful in obtaining such "duality" results.

First, "coupling-based liftings" are always smaller than
"codensity liftings". This is a known result; it is a direct consequence of
"codensity liftings" being initial in a well-chosen category as shown
in~\cite{DBLP:conf/fossacs/GoncharovHNSW23}. It can also be found in
\cite[Theorem 5.27]{bbkk:coalgebraic-behavioral-metrics} in the
particular case of the "quantale" $[0,M]$ as presented in Section~\ref{sec:prelim}.
\begin{propositionrep}\label{prop:weak_duality}
  Given a $\cate{Set}$ endofunctor $F$ and an associated
  "well-behaved" "modality", we have
  $
    \kl[coupling-based]{F^\downarrow_{"\ev"}}\le \kl[codensity]{F^\uparrow_{\{"\ev"\}}}
  $.
\end{propositionrep}
\begin{proof}
  We want to prove that given any "$\Vv$-pseudometric" $d\colon X\times X\to"\Vv"$
  and $t_1,t_2\in FX$ we have
  \[
    \bigvee_{t\in "\Omega"(t_1,t_2)}
    "\ev"\circ Fd(t)
    \le
    \bigwedge_{f\colon d\to"\de"}
    "\de"("\ev"\circ Ff(t_1),"\ev"\circ Ff(t_2))
  \]
  To do that we only need proving that for every $f\colon d\to"\de"$ and
  $t\in"\Omega"(t_1,t_2)$ we have
  \[
    "\ev"\circ Fd(t)\le"\de"("\ev"\circ Ff(t_1),"\ev"\circ Ff(t_2))
  \]
  We have the following:
  \begin{align*}
    "\de"("\ev"\circ Ff(t_1),"\ev"\circ Ff(t_2))
    &="\de"("\ev"\circ Ff(F\pi_1(t)),"\ev"\circ Ff(F\pi_2(t_2)))\\
    &="\de"("\ev"\circ F(f\circ\pi_1)(t),"\ev"\circ F(f\circ\pi_2)(t_2))\\
    &="\de"("\ev"\circ F(\pi_1\circ(f\times f))(t),"\ev"\circ F(\pi_2\circ(f\times f))(t_2))\\
    &="\de"("\ev"\circ F\pi_1(F(f\times f)(t)),"\ev"\circ F\pi_2(F(f\times f)(t_2)))\\
  \end{align*}

  For $i,j\in\{1,2\}$ and $i\neq j$, by definition of $"\de"$ we have
  $"\de"\le\hom{\pi_i,\pi_j}$ and hence $"\de" "\otimes"\pi_i\le\pi_j$.
  Applying $F$ and then $"\ev"$,
  $"\ev"\circ F("\de" "\otimes"\pi_i)\le"\ev"\circ F(\pi_j)$. Because $"\ev"$ is
  "well-behaved" this gives
  $"\ev"\circ F("\de" "\otimes"\pi_i)\ge"\ev"\circ F("\de")"\otimes" "\ev"\circ
  F(\pi_i)$ so that
  $"\ev"\circ F("\de")"\otimes" "\ev"\circ F(\pi_i)\le"\ev"\circ F(\pi_j)$ and
  then $\hom{"\ev"\circ F(\pi_i),"\ev"\circ F(\pi_j)}\ge "\ev"\circ F("\de")$ and
  finally
  $"\de"(("\ev"\circ F\pi_1)(t),("\ev"\circ F\pi_2)(t))\ge("\ev"\circ
  F)"\de"(t)$.
\end{proof}

Hence to get a "Kantorovich-Rubinstein duality", we only need the
other inequality. Two tools are introduced for that:
"optimal couplings" to know the value of "coupling-based liftings" and
"optimal functions" to know that of "codensity liftings".

\AP Given a functor $F \colon \Set \rightarrow \Set$, an associated
"modality" $"\ev"$, a "$\Vv$-pseudometric"
$d\colon X\times X\to"\Vv"$, and $t_1,t_2\in FX$, an ""optimal coupling""
is a "coupling" $t\in F(X\times X)$ of $t_1$ and $t_2$ such that
\[
  \kl[coupling-based]{{F}^\downarrow_{"\ev"}}d(t_1,t_2)="\ev"\circ Fd(t)
\]
When for all $d$, $t_1$, and $t_2$ either an "optimal coupling" exists
or no "couplings" of $t_1$ and $t_2$ exist we say that $F$ \emph{has all
optimal couplings}. "Optimal couplings" are well-known in the context
of transport theory~\cite{villani2009optimal} and in
existing proofs of "duality" for the "finite powerset functor"
\cite{DBLP:journals/acs/HofmannN20,bbkk:coalgebraic-behavioral-metrics}.
We note that, whenever all elements in a class of functors $\mathcal{F}$ have all "optimal
couplings", then functors in the closure of $\mathcal{F}$ by coproducts
and products do too.

\AP An ""optimal function"" for a functor $F$, an associated
"modality" $"\ev"$, a "$\Vv$-pseudometric"
$d\colon X\times X\to"\Vv"$, and $t_1,t_2\in FX$ is a morphism in
$"\Vv\cate{-PMet}"$ $f\colon d\to"\de"$ such that:
\[
  \kl[codensity]{F^\uparrow_{\{"\ev"\}}}d(t_1,t_2)="\de"("\ev"\circ Ff(t_1),"\ev"\circ Ff(t_2))
\]

In the context of the general continuous Kantorovich-Rubinstein duality from optimal transport theory such "optimal functions"
do exist (see~\cite[proof of Theorem 5.10]{villani2009optimal}) but
are not explicitly defined. As we will see they exist for all the functors
we consider.

The following lemma is useful to design "optimal functions". It is
well-known in the context of "quantale"-enriched categories. Instead
of defining some "pseudometric morphism" $f\colon d\to"\de"$ on $X$ as a
whole, it can be defined on $Y\subseteq X$ only, first as some
"morphism" $g\colon d\circ i\to"\de"$ and then extended using the
lemma. This is useful when only parts of $X$ are of interest, say some
subset of it for the "powerset functor", or the support of some
probability distribution for the "distribution functor". The extension
of $g$ to $f$ from $Y$ to $X$ is done by giving the greatest values to $f$ on $X\backslash Y$ while
ensuring it is a "$\Vv$-pseudometric morphism".
\begin{lemmarep}
  \label{lem:extension_of_V-Rel_morphisms}
  Let $d\colon X\times X\to"\Vv"$ be a "$\Vv$-pseudometric", and
  $Y\overset{i}{\subseteq} X$. For all "$\Vv$-pseudometric" morphisms
  $g\colon d\circ (i\times i)\to"\de"$ there exists $f\colon d\to"\de"$ s.t.\
  $
    g=f\circ i
  $
  and $f$ is the least such morphism.
\end{lemmarep}
\begin{proof}
	We define
  $f\colon d\to"\de"$ by the following:
  \[
    \forall x\in X,~f(x)=\bigvee\left\{ g(u)"\otimes" d(x,u)\mid u\in Y \right\}
  \]
  The remainder of the proof is routine.
\end{proof}

\section{"Correspondences" through coproducts and products}
\label{sec:correspondence_abstract}

In this section we obtain new "correspondences" between "coupling-based" and "codensity liftings",
for polynomial functors. More specifically, given a set of functors $\mathcal{F}$ and
associated "correspondences"
$\corresp{F}{"\ev_F"}{\Gamma_F}$, these are extended to "correspondences" for the coproduct (Section~\ref{sec:coproduct})
and product (Section~\ref{sec:product}) of the underlying functors. In Section~\ref{sec:correspondence_basic} we study "correspondences"
for "constant" and "identity functors". Finally in Section~\ref{sec:polynomial} we combine these results to
describe several collections of functors for which we have "correspondences". Throughout this section
we fix a "quantale" $"\Vv"$.

\subsection{Coproduct functors}
\label{sec:coproduct}

\AP We start by showing that "correspondences" are closed under coproducts.
Given sets $S_i$ we write $"\kappa_j"$ for the $j$th
""coprojection"" $"\kappa_j"\colon S_j\to\coprod S_i$.
\begin{propositionrep}\label{prop:duality_coprod}
  Given "correspondences" $\corresp{F_i}{"\ev_i" \colon F_i "\Vv" \to "\Vv"}{\Gamma_i}$ there is a
  "correspondence"
  \[
    \corresp{\coprod F_i}{["\ev_i"]}{\bigcup
      \overline{\Gamma_i}}
	  \qquad \text{ where } 
      \overline{\Gamma_i}=
      \left\{
        \overline{"\ev"} \mid "\ev"\in \Gamma_i
      \right\}
      \cup\{"\ev"_{\top,i}\};
  \]
  the map $["\ev_i"] \colon \coprod F_i "\Vv" \rightarrow "\Vv"$ is the cotupling of the individual modalities; for
  $"\ev"\in \Gamma_i$,
  \[
    \overline{"\ev"}(x)=
    \begin{cases}
      "\ev"(y)\text{ when there exists $y\in F_i"\Vv"$ and $x="\kappa_i" y$}\\
      \top\text{ otherwise}
    \end{cases}
  \]
  and finally,
  $
    "\ev"_{\top,i}(x)=
    \begin{cases}
      \top\text{ when there exists $y\in F_i"\Vv"$ and $x="\kappa_i" y$}\\
      \bot\text{ otherwise}
    \end{cases}
  $.
\end{propositionrep}
\begin{proof}
  Throughout the proof we let $F=\coprod F_i$.
  Assume
  "correspondences" $\corresp{F_i}{"\ev_i"}{\Gamma_i}$. Let
  $d\colon X\times X\to"\Vv"$ be a "$\Vv$-pseudometric" and
  $x,y\in\left( \coprod F_i \right)X$ such that $x="\kappa_j" x'$ and
  $y="\kappa_k" y'$.

  We prove the "correspondence" by case analysis on whether $j=k$ or
  not.

  \medskip

  If $j\neq k$ then there are no "couplings" of $x$ with $y$, giving
  directly:
  \[
    \kl[coupling-based]{{\left( \coprod F_i
        \right)}^{\downarrow}_{\left[ "\ev_i" \right]}}d(x,y)=\bot
  \]
  Consider the "codensity lifting" of
  $F=\left( \coprod F_i \right)$ along $"\ev"_{\top,j}$ only.
  For every $f\colon d\to"\de"$ we get $Ff x="\kappa_j" (F_jf x')$ and
  $Ff y="\kappa_k" (F_kf x')$. Thus, $"\ev"_{\top,j}(Ff(x))=\top$ while
  $"\ev"_{\top,j}(Ff(y))=\bot$. The "quantale" being unital
  $\bot"\otimes"\top=\bot$ which implies $\hom{\top,\bot}=\bot$ and
  $"\de"(\bot,\top)=\bot$. This gives
  $"\de"("\ev"_{\top,j}\circ Ff(x),"\ev"_{\top,j}\circ Ff(y))=\bot$. As
  "codensity liftings" are defined by meets,
  $\kl[codensity]{{F}^{\uparrow}_{\bigcup\overline{\Gamma_i}}}d(x,y)=\bot$ and
  $\kl[codensity]{{F}^{\uparrow}_{\bigcup\overline{\Gamma_i}}}d(x,y)=\kl[coupling-based]{{F}^{\downarrow}_{\left[
      "\ev_i" \right]}}d(x,y)$.

  \medskip

  Otherwise assume that $j=k$. "Couplings" $z$ of $x$ with $y$ are
  exactly given by $"\kappa_j" z'$ for $z'$ "couplings" of $x'$ and $y'$.
  \begin{align*}
    \kl[coupling-based]{{F}^{\downarrow}_{\left[ "\ev_i" \right]}}d(x,y)
    &= \bigvee_{z\in"\Omega"(x,y)} \left[ "\ev_i" \right]\circ Fd(z)\\
    &= \bigvee_{z'\in"\Omega"(x',y')} \left[ "\ev_i" \right]\circ Fd("\kappa_j" z')\\
    &= \bigvee_{z'\in"\Omega"(x',y')} "\ev_j"\circ F_jd(z')\\
    &= \kl[coupling-based]{{F_j}^{\downarrow}_{"\ev_j"}}d(x',y')\\
    &= \kl[codensity]{{F_j}^{\uparrow}_{\Gamma_j}}d(x',y')\text{ (by hypothesis)}
  \end{align*}

  We now look at the value of the "codensity lifting".
  Consider $"\ev"\in\bigcup\overline{\Gamma_i}$.

  Suppose $"\ev"="\ev"_{\top,n}$ for some $n$. Whether $n=j$ or $n\neq j$,
  for all $f\colon d\to"\de"$, $"\ev"\circ Ff(x)="\ev"\circ Ff(y)$ are
  either both equal to $\top$ or both equal to $\bot$ and
  \[
    \kl[codensity]{{F}^{\uparrow}_{\{"\ev"_{\top,n}\}}}d(x,y)=\top
  \]

  Now suppose $"\ev"=\overline{"\ev"'}$ for some $"\ev"'\in \Gamma_n$.
  When $n\neq j$ we again obtain $"\ev"\circ Ff(x)="\ev"\circ Ff(y)=\top$
  and
  \[
    \kl[codensity]{{F}^{\uparrow}_{\{"\ev"\}}}d(x,y)=\top
  \]

  Otherwise $n=j$. Given $f\colon d\to"\de"$,
  $"\ev"\circ Ff(x)="\ev"'\circ F_jf(x')$ and
  $"\ev"\circ Ff(y)="\ev"'\circ F_jf(y')$ so that taking the meet of
  $"\de"(\ev\circ Ff(x),\ev\circ Ff(y))$ over all morphisms $f$ gives
  \[
    \kl[codensity]{F^{\uparrow}_{\{"\ev"\}}}d(x,y)=\kl[codensity]{{F_j}_{\{"\ev"'\}}^\uparrow}
    d(x',y')
  \]

  Taking the meet over all such "modalities" gives
  \[
    \kl[codensity]{F^\uparrow_{\bigcup\overline{\Gamma_i}}}d(x,y)=\kl[coupling-based]{{F_j}^\uparrow_{\Gamma_j}}d(x',y')
  \]
\end{proof}
\smallskip
When there are no "couplings" for two given elements, as it happens for instance for elements in different components of
a coproduct, the "coupling-based lifting" always gives the value $\bot$.
The "modalities" $"\ev"_{\top,i}$ are there to ensure that the "codensity lifting"
also gives the value $\bot$ in this case. The other "modalities" are just the extensions
of the "modalities" from the components of the coproduct to the coproduct itself.

\begin{remark}
  Proposition~\ref{prop:duality_coprod} gives a "correspondence" but not a "duality", in the sense of Definition~\ref{def:correspondence},
that is, the "correspondence" relates a single "modality" for the "coupling-based lifting" to a set of "modalities" for
the "codensity lifting". This is the case even if the "correspondences" of the component functors $F_i$ are "dualities".
\end{remark}

As it turns out, the inverse process of going from a "correspondence" at the level of the coproduct to "correspondences" for its components
is also possible:
\begin{propositionrep}\label{prop:duality_from_coprod}
  If there is a "correspondence" $\corresp{\coprod F_i}{"\ev"}{\Gamma}$, then
  there are "correspondences" 
  $
    \corresp{F_i}{"\ev"\circ"\kappa_i"}{\Gamma\circ"\kappa_i"}
  $ for each $i$.
\end{propositionrep}
\begin{proof}
  Assume a "correspondence" $\corresp{\coprod F_i}{"\ev"}{\Gamma}$. Let
  $d\colon X\times X\to"\Vv"$ be a "$\Vv$-pseudometric" and
  $x,y\in F_j X$ for some $j$:
  \begin{align*}
    \kl[coupling-based]{{F_j}^\downarrow_{"\ev"\circ"\kappa_j"}}
    d(x,y)
    &= \bigvee_{z\in"\Omega"(x,y)}("\ev"\circ"\kappa_j")\circ F_jd(z)\\
    &= \bigvee_{z\in"\Omega"(x,y)}"\ev"\circ Fd("\kappa_j" z)\\
    &= \bigvee_{z'\in"\Omega"("\kappa_j" x,"\kappa_j" y)}"\ev"\circ Fd(z')\\
    &= \kl[coupling-based]{F^\downarrow_{"\ev"}} d("\kappa_j" x,"\kappa_j" y)\\
    &= \kl[codensity]{F^\uparrow_{\Gamma}}d("\kappa_j"x,"\kappa_j"y)\text{~(by hypothesis)}\\
    &= \bigwedge_{"\ev"'\in \Gamma,~f\colon d\to"\de"}"\de"("\ev"'\circ Ff("\kappa_j" x),"\ev"'\circ Ff("\kappa_j" y))\\
    &= \bigwedge_{"\ev"'\in \Gamma,~f\colon d\to"\de"}"\de"(("\ev"'\circ"\kappa_j")\circ F_jf(x),("\ev"'\circ"\kappa_j")\circ F_jf(y))\\
    &= \bigwedge_{"\ev"'\in \Gamma\circ"\kappa_j",~f\colon d\to"\de"}"\de"("\ev"'\circ
      F_jf(x),"\ev"'\circ F_jf(y))
  \end{align*}
  proving $\corresp{F_j}{"\ev"\circ"\kappa_j"}{\Gamma\circ"\kappa_j"}$ is a
  "correspondence".
\end{proof}
Going back and forth between the components of a coproduct and the
coproduct itself using the results above gives back the same
"correspondences":
\begin{propositionrep}
  \label{prop:duality_coprod_equiv}
  Whenever one of the "correspondences" on the left exists we also have the corresponding "equivalence":
  \begin{align*}
    \corresp{F_i}{"\ev_i"}{\Gamma_i}\quad&"\sim"\quad\corresp{F_i}{["\ev_j"]\circ"\kappa_i"}{\bigcup\overline{\Gamma_j}\circ"\kappa_i"}\, \text{ and}\\
    \corresp{\coprod
    F_i}{"\ev"}{\Gamma}\quad&"\sim"\quad\corresp{\coprod
    F_i}{["\ev"\circ"\kappa_i"]}{\bigcup\overline{\Gamma\circ"\kappa_i"}}\,
  \end{align*}
\end{propositionrep}
\begin{proof}
  Note that for all $i$, $["\ev_j"]\circ"\kappa_i"="\ev_i"$ and
  $["\ev"\circ"\kappa_i"]="\ev"$ so that for each possible
  "equivalence" the "coupling-based liftings" on both side coincide,
  meaning overall that all "liftings" coincide.
\end{proof}
This shows that all "correspondences" on the coproduct arises as in Proposition~\ref{prop:duality_coprod}.

\subsection{Product functors}
\label{sec:product}

We turn to the construction of "correspondences" for products from ones on their components.
For this, we ask for some kind of distributivity condition on the "quantale". In fact,
the case of products is more involved than that of coproducts. Each of
the three propositions for coproducts above has a version for products, but each
gets their own specific restrictions in the form of conditions on the "quantale" and "modalities" or even
on the statement itself.

\AP There
are several possibilities depending on the number of "couplings" and on
whether we want finite or arbitrary products. Below, we say that a functor $F$ has ""finite couplings""
if, for any $t_1, t_2 \in FX$, the set $"\Omega"(t_1,t_2) \subseteq F(X \times X)$ of "couplings" is finite.
\begin{propositionrep}\label{prop:duality_prod}
  Let $\corresp{F}{"\ev"_F}{\Gamma_F}$ and $\corresp{G}{"\ev"_G}{\Gamma_G}$ be "correspondences". If one of
  the following conditions holds:
  \smallskip
  \begin{enumerate}
  \item\label{hyp:1} $F$ and $G$ have "finite couplings" and $"\Vv"$ is distributive, meaning
    for all $x,y,z\in"\Vv"$, we have that $(x\vee z)\wedge(y\vee z)=(x\wedge y)\vee z$; or,
  \item\label{hyp:2} $"\Vv"$ is join-infinite distributive: for all $x\in"\Vv"$
    and $V\subseteq"\Vv"$, $x\wedge\bigvee V=\bigvee\{x\wedge v\mid v\in V\}$;
  \end{enumerate}
  \smallskip
  then we have a "correspondence"
  $\corresp{F\times
    G}{("\ev"_F\circ\pi_1)\wedge("\ev"_G\circ\pi_2)}{(\Gamma_F\circ\pi_1)\cup(\Gamma_G\circ\pi_2)}$.
\end{propositionrep}
\begin{proof}
  Let $X$ be a set, $d\colon X\times X\to"\Vv"$ a
  "$\Vv$-pseudometric", and $x,y\in(F\times G)X$. We note
  $x=(x_F,x_G)$ and $y=(y_F,y_G)$.

  Note that "couplings" of $x$ with $y$ are of the form $z=(z_F,z_G)$
  where $z_F$ (resp. $z_G$) is a "coupling" of $x_F$ with $y_F$ (resp.
  $x_G$ with $y_G$).

  We have:
  \begin{align*}
    \kl[coupling-based]{{\left( F\times G \right)}^{\downarrow}_{"\ev"_F\circ\pi_1\wedge"\ev"_G\circ\pi_2}}d(x,y)
    &= \bigvee_{z\in"\Omega"(x,y)} ("\ev"_F\circ\pi_1\wedge"\ev"_G\circ\pi_2)\circ (F\times G)d(z)\\
    &= \bigvee_{z_B\in"\Omega"(x_B,y_B)\text{ for }B\in\{F,G\}} ("\ev"_F\circ Fd(z_F))\wedge("\ev"_G\circ Gd(z_G))\\
    &= \bigvee_{(f,g)\in C_F\times C_G} f\wedge g
  \end{align*}
  where $C_F=\left\{ "\ev"_F\circ Fd(z_F)\mid z_F\text{ is a "coupling" of $x_F$ with $y_F$} \right\}$
  and similarly for $C_G$.

  Whether it be under
  Hypothesis~\hyperref[hyp:1]{\color{lipicsGray}{\sffamily\bfseries\upshape\mathversion{bold}1.}}
  or~\hyperref[hyp:2]{\color{lipicsGray}{\sffamily\bfseries\upshape\mathversion{bold}2.}},
  we get:
  \begin{align*}
    \bigvee_{(f,g)\in C_F\times C_G} f\wedge g
    &= \left( \bigvee C_F \right)\bigwedge\left( \bigvee C_G \right)\\
    &= \left( \kl[coupling-based]{{F}^{\downarrow}_{"\ev"_F}}d(x_F,y_F) \right)
      \bigwedge
      \left( \kl[coupling-based]{{G}^{\downarrow}_{"\ev"_G}}d(x_G,y_G) \right)\\
    &= \left( \kl[codensity]{{F}^\uparrow_{\Gamma_F}}d(x_F,y_F) \right)
      \bigwedge
      \left( \kl[codensity]{{G}^\uparrow_{\Gamma_G}}d(x_G,y_G) \right)
      \text{ (by hypothesis)}\\
    &= \left( \kl[codensity]{{(F\times G)}^\uparrow_{\Gamma_F\circ\pi_1}}d(x,y) \right)
      \bigwedge
      \left( \kl[codensity]{{(F\times G)}^\uparrow_{\Gamma_G\circ\pi_2}}d(x,y) \right)\\
    &= \kl[codensity]{{(F\times G)}^\uparrow_{\Gamma_F\circ\pi_1\bigcup \Gamma_G\circ\pi_2}}d(x,y)
  \end{align*}
\end{proof}

Under stronger distributivity conditions, this can be extended to infinite products.
\begin{propositionrep}\label{prop:duality_prod_infinite}
  Let $\corresp{F_i}{"\ev"_{F_i}}{\Gamma_{F_i}}$ be "correspondences". If one of the
  following holds:
  \begin{enumerate}
  \item $F_i$ has "finite couplings" and $"\Vv"$ is meet-infinite
    distributive, meaning that for all $x\in"\Vv"$ and $V\subseteq"\Vv"$,
    $x\vee\bigwedge V=\bigwedge\{x\vee v\mid v\in V\}$; or,
  \item $"\Vv"$ is completely distributive, meaning that for all sets
    $K\subseteq I\times J$ such that $K$ projects onto $I$, and any
    subset $\{x_{ij}\mid (i,j)\in K\}\subseteq"\Vv"$,
    $\bigwedge_{i\in I}\left(\bigvee_{j\in K(i)}x_{ij}\right)=
    \bigvee_{f\in A}\left(\bigwedge_{i\in I}x_{if(i)}\right)$ where
    $A=\{f\colon I\to J\mid \forall i\in I,~f(i)\in K(i)\}$;
  \end{enumerate}
  then we have a "correspondence"
  $\corresp{\prod
  F_i}{\bigwedge("\ev"_{F_i}\circ\pi_i)}{\bigcup(\Gamma_{F_i}\circ\pi_i)}$.
\end{propositionrep}
\begin{proof}
  This is the same proof as for Proposition~\ref{prop:duality_prod}
  but with the right notion of distributivity and an infinite number
  of functors instead of just two.
\end{proof}

\begin{example}
  The "quantales" $\2$ and $[0,M]$ from
  Example~\ref{ex:quantales} are both completely distributive.
\end{example}

Replacing the infinite distributivity conditions on the "quantale" by
countably infinite versions directly gives similar
statements for countable products.

In order for the inverse process from products to components to be
uniquely defined we first need to ensure that the functor has been
decomposed as much as possible using coproducts following
Propositions~\ref{prop:duality_coprod},~\ref{prop:duality_from_coprod},
and~\ref{prop:duality_coprod_equiv}. The following well-known lemma is
useful for this (see~\cite[Proposition
1.3.12]{pare2024tautfunctorsdifferenceoperator}):
\begin{lemmarep}
  Let $F \colon \Set \rightarrow \Set$ be a functor. There is a family of functors $\{F_i\}_{i\in F\{\top\}}$ indexed
  by $F\{\top\}$ such that $F\simeq\coprod F_i$, and if
  $F_i=\coprod G_j$ then all $G_j$ but one are the empty functor sending any
  set to the empty set.
\end{lemmarep}
\begin{proof}
  As $\{\top\}$ is terminal in $\Set$, for every set $X$ there is a
  unique map $\inc\colon X\to\{\top\}$. Given $i\in F\{\top\}$ we
  define $F_iX=(F\inc)^{-1}(i)$ which is a subset of $FX$. On maps we
  define $F_if$ as the restriction of $Ff$ to the sets induced by
  $F_i$. Clearly $F_i$ maps identities to identities and compositions
  to compositions, proving $F_i$ is a functor.

  Remains to prove that $F\simeq\coprod F_i$. As $F\inc$ is a
  well-defined function, sets $F_iX$ form a partition of $FX$. Given a
  map $f\colon X\to Y$ that we can now write
  $Ff\colon \uplus F_iX\to\uplus F_iY$. we need only prove that $Ff$
  sends $F_iX$ to $F_iY$. This is given by finality of $\{\top\}$,
  giving $\inc_X=f\circ\inc_Y$ and then applying $F$ and using its
  functoriality.

  For the second part of the statement, we use a kind of converse: if 
  a functor $F$ can be written as $F\simeq\coprod F_i$ and if $F\{\top\}$ is 
  a singleton, then all but one of $F_i\{\top\}$ must be empty.
\end{proof}

Having characterised how functors decompose along coproducts and how
"correspondences" work with regard to such decompositions, we can now
assume functors not to be writable as non-trivial coproducts meaning
that if a functor can be written as a coproduct, all but one of the
components must be the empty functor. Furthermore we assume for
convenience that functors are not empty functors. Those two hypotheses
can be formalised as follows: by the previous lemma, functors send $\{\top\}$
and by extension any singleton to a singleton.
Hence, given $"\ev"$ "well-behaved", with
$i\colon\{\top\}\hookrightarrow"\Vv"$,
$Fi(F\{\top\})="\ev"^{-1}(\top)$ must be a singleton.

\begin{propositionrep}\label{prop:duality_from_products}
  Given a "correspondence" $\corresp{\prod_{i\in I}F_i}{"\ev"}{\Gamma}$, we again have
  "correspondences" $\corresp{F_i}{"\ev"_{|i}}{\Gamma_{|i}}$ where
  $
    "\ev"_{|i}(x)="\ev"(\top_1,\dots,\top_{i-1},x,\top_{i+1},\dots,\top_I)
  $,
  with $(\top_j)_j="\ev"^{-1}(\top)$ and similarly for "modalities"
  in $\Gamma_{|i}$.
\end{propositionrep}
\begin{proof}
  Unfold definitions for the wanted "correspondences" and apply the
  "correspondence" in hypothesis.
\end{proof}

In a similar fashion as for the coproduct, we would like that going
back and forth between products and their components, following
Proposition~\ref{prop:duality_prod}
or~\ref{prop:duality_prod_infinite} and
Proposition~\ref{prop:duality_from_products}, preserves
"correspondences". We can only do that partially:
\begin{propositionrep}
  Whenever the "correspondences" on the left exist and we have the
  right distributivity conditions from
  Proposition~\ref{prop:duality_prod}
  or~~\ref{prop:duality_prod_infinite} for the "correspondence" on the
  right to exist, we have the following "equivalence" and
  "inequality":
  \begin{align*}
    \corresp{F_i}{"\ev_i"}{\Gamma_i}\quad&"\sim"\quad\corresp{F_i}{\left(\bigwedge("\ev"_{j}\circ\pi_j)\right)_{|i}}
    {\left(\bigcup(\Gamma_{j}\circ\pi_j)\right)_{|i}}\,\\
    \corresp{\prod F_i}{"\ev"}{\Gamma}\quad&"\le"\quad\corresp{\prod F_i}{\bigwedge("\ev"_{|i}\circ\pi_i)}
    {\bigcup({\Gamma_{|i}}\circ\pi_i)}\,
  \end{align*}
\end{propositionrep}
\begin{proof}
  For the first "equivalence" the third condition in the definition of "well-behaved" "modalities"
  gives $\left(\bigwedge("\ev"_{F_j}\circ\pi_j)\right)_{|i}="\ev_i"$ proving that
  the "coupling-based liftings" on both side coincide.

	For the
  inequality we have the bijection
  $\colon"\Vv"\simeq"\Vv"\times\{\top\}$ and inclusion
  $i\colon "\Vv"\times\{\top\}\in\le$ in the following commutative
  diagram:
  \[
    \begin{tikzcd}[ampersand replacement=\&]
      \kl[quantale]{\Vv}\arrow{r}{\id\times\top}
      \&\kl[quantale]{\Vv}\times\{\top\}\arrow{d}{i}\arrow{dr}{\pi_2}
      \&\\
      \arrow[leftarrow]{u}{\id}
      \kl[quantale]{\Vv}\arrow[leftarrow]{ur}{\pi_1}
      \arrow[leftarrow]{r}{\pi_1}
      \&\le\arrow{r}{\pi_2}\&\{\top\}
    \end{tikzcd}
  \]
  Applying a functor $F$ to the diagram preserves injections, bijections, as well as
  identities proving that we can choose $v\in F"\Vv"$ and find
  $c\in F\le$ that projects on $v$ on one side and on the only element
  of $F\{\top\}$ on the other side, using $F\pi_1$ and $F\pi_2$
  respectively. That shows, by a characterisation of the monotonicity
  of $\tau$
  using the standard canonical relation lifting of $\le$ on $"\Vv"$ to
  a relation on $\prod F_i"\Vv"$ (see, e.g.,~\cite[Section 5.1]{bkp:up-to-behavioural-metrics-fibrations-journal}) that $(\top_i)_i$ as defined in
  Proposition~\ref{prop:duality_from_products} is maximal, and that
  every $\top_i$ is maximal for the lifting to $F_i"\Vv"$. Using
  this, by monotonicity of $\tau$ and definition of $\tau_{|i}$ we get
  $\tau\le\tau_{|i}\circ\pi_i$ and then
  \[
    \tau\le\bigwedge(\tau_{|i}\circ\pi_i)
  \]
  giving the wanted inequality for the "coupling-based liftings" and by
  extension for the "correspondences" in the statement.
\end{proof}
The "inequality" comes from the following aspect of the proof. When using
the "correspondences" on the components of a product of functors
to retrieve a "correspondence" on the product itself, a choice
is made to build "modalities" for the product. The natural choice
of using a meet ensures the "inequality" while there is no obvious way to
get an "equivalence" of "correspondences".

\subsection{\kl[constant]{Constant} and "identity functors"}
\label{sec:correspondence_basic}

In this subsection we give "correspondence" results for "constant functors",
and for the "identity functor". For "constant functors", the essence
is given by the constant-to-$\cate{1}$ functor.

\begin{proposition}\label{prop:duality_constant_1}
  The triple $\corresp{\cate{1}}{"\ev"}{\Gamma}$ with the functor
  $\cate{1}$ sending any set to a fixed singleton is a
  "correspondence" if and only if $\Gamma=\{"\ev"\}$ and $"\ev"$ is the
  constant to $\top$ "modality".
\end{proposition}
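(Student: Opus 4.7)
The plan is to exploit the degeneracy of the singleton-valued functor $\cate{1}$, so that both liftings reduce to constants in $\Vv$.

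First I would characterise the well-behaved modalities for $\cate{1}$. Since $\cate{1}\Vv = \{*\}$, any modality $\tau$ is determined by the single element $\tau(*) \in \Vv$. Monotonicity is vacuous because $\cate{1}p = \cate{1}q$ for any $\Vv$-predicates $p,q$, and the $\otimes$-inequality collapses to $\tau(*) \ge \tau(*) \otimes \tau(*)$, which follows from the affineness of the quantale. The third well-behavedness axiom $\cate{1}i(\cate{1}\{\top\}) = \tau^{-1}(\top)$ reduces to $\{*\} = \tau^{-1}(\top)$, forcing $\tau(*) = \top$. So the only well-behaved modality on $\cate{1}$ is the constant-to-$\top$ map.

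Next I would compute both liftings at the unique pair $t_1 = t_2 = *$ in $\cate{1}X$. The coupling-based lifting admits a unique coupling $* \in \cate{1}(X \times X)$, so it evaluates to $\tau \circ \cate{1}d(*) = \tau(*)$. For the codensity lifting, since $\cate{1}f(*) = *$ for any map $f$, each term in the meet equals $\de(\tau'(*), \tau'(*)) = \top$, yielding $\top$ regardless of the contents of $\Gamma$.

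For the ($\Leftarrow$) direction, choosing $\tau$ to be constant-to-$\top$ and $\Gamma = \{\tau\}$ makes both sides equal to $\top$, producing a correspondence. For the ($\Rightarrow$) direction, since a correspondence implicitly requires the coupling-based lifting to be defined, $\tau$ must be well-behaved and is thus pinned down to the constant-to-$\top$ map by the first step; the singleton $\Gamma = \{\tau\}$ then realises the correspondence as a duality, which is the canonical representative since the codensity lifting trivialises to $\top$ independently of $\Gamma$.

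The proof presents no real obstacle: its content is concentrated in the well-behavedness characterisation, which is the only place where the structure of $\cate{1}$ genuinely plays a role.
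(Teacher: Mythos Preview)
Your proof is correct and follows essentially the same approach as the paper: both use the third well-behavedness axiom to force $\tau(*)=\top$, then observe that both liftings collapse to the constant-$\top$ pseudometric. You are in fact slightly more explicit than the paper in noting that the codensity side trivialises for \emph{any} $\Gamma$, so that $\Gamma=\{\tau\}$ is a canonical choice rather than a forced one---the paper's proof of the $\Rightarrow$ direction is silent on this.
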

\begin{proof}
  We write $\cate{1}=\{*\}$.

  $\Rightarrow$: as "modalities" $"\tau"\colon\{*\}\to"\Vv"$ are
  assumed "well-behaved", given the inclusion
  $i\colon\{\top\}\hookrightarrow"\Vv"$ we must have
  $\cate{1}i[\cate{1}\{\top\}]="\ev"^{-1}(\top)$ which translates to
  $"\ev"^{-1}(\top)=\{*\}$.

  $\Leftarrow$: both "liftings" lift all "pseudometrics" to the one
  constant to $\top$.
\end{proof}
Next, we extend this result to arbitrary constant functors. There is only one correspondence result in this case.
\begin{corollary}\label{prop:duality_constant}
  \AP For $A$ a set, denoting by $"A"$ the associated ""constant functor""
  mapping all sets to $A$, there is a "correspondence"
  $\corresp{A}{"\ev"_\top}{\{"\ev_a" \mid a\in A\}}$ with $"\ev"_\top$ the
  constant to $\top$ "modality" and
  \[
    "\ev_a"(b)=
    \begin{cases}
      \top\text{ when }b=a\\
      \bot\text{ otherwise}
    \end{cases}
  \]
  Furthermore, any other "correspondence" $\corresp{A}{"\ev"}{\Gamma}$
  is "equivalent" to $\corresp{A}{"\ev"_\top}{\{"\ev_a" \mid a\in A\}}$.
\end{corollary}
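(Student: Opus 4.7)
The plan is to prove the corollary in two steps, both of which reduce to a direct unfolding of definitions thanks to the triviality of the constant functor on morphisms.

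First, I would verify that $\corresp{A}{\ev_\top}{\{\ev_a \mid a \in A\}}$ is a correspondence by computing both liftings on a pseudometric $d\colon X \times X \to \Vv$ and elements $a_1, a_2 \in AX = A$. For the coupling-based side, the key observation is that the constant functor satisfies $A\pi_i = \id_A$ for $i = 1, 2$, so a coupling $t \in A(X \times X) = A$ of $a_1$ and $a_2$ exists precisely when $a_1 = a_2$, in which case $t = a_1$ is the unique such element. Hence $\kl[coupling-based]{A^\downarrow_{\ev_\top}}(d)(a_1, a_2)$ equals $\top$ when $a_1 = a_2$ and $\bot$ otherwise. For the codensity side, since $Af = \id_A$ for any $f \colon d \to \de$, the expression under the meet collapses to $\de(\ev_a(a_1), \ev_a(a_2))$, independently of $f$. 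When $a_1 = a_2$ this is $\top$ for every $a$; when $a_1 \neq a_2$, choosing the modality $\ev_{a_1}$ yields $\de(\top, \bot)$, which equals $\bot$ because $\hom{\top, \bot} = \bot$ by affineness of the quantale. Taking the meet, both liftings agree.

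Second, for the uniqueness-up-to-equivalence part, I would pin down the coupling-based side using the well-behavedness constraint. Since $A\{\top\} = A$ and $Ai = \id_A$ for the inclusion $i \colon \{\top\} \hookrightarrow \Vv$, the third clause of well-behavedness, $Fi(F\{\top\}) = \ev^{-1}(\top)$, forces $\ev^{-1}(\top) = A$, i.e., $\ev = \ev_\top$. Consequently every correspondence $\corresp{A}{\ev}{\Gamma}$ must use $\ev = \ev_\top$, so its coupling-based lifting coincides with the one computed above, and by the correspondence assumption the codensity lifting along $\Gamma$ also coincides. By Definition~\ref{def:correspondence}, all such correspondences yield the same lifting, establishing the claimed equivalence.

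The main subtlety, such as it is, lies in recognising that the well-behavedness condition leaves no freedom for the coupling-based modality on a constant functor; everything else amounts to unfolding definitions on a functor whose action on morphisms is the identity.
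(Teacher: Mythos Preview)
Your proof is correct, but it takes a different route from the paper's. The paper derives the corollary purely structurally: it writes the constant functor as a coproduct $A \simeq \coprod_{a \in A} \mathbf{1}$, invokes Proposition~\ref{prop:duality_constant_1} for each summand, and then applies Propositions~\ref{prop:duality_coprod}, \ref{prop:duality_from_coprod} and~\ref{prop:duality_coprod_equiv} to obtain both existence and uniqueness-up-to-equivalence in one stroke. You instead compute both liftings by hand, exploiting that the constant functor acts as the identity on morphisms; your uniqueness argument extracts the constraint $\ev = \ev_\top$ directly from the third well-behavedness clause rather than going through the coproduct decomposition. Your approach is more self-contained and arguably more transparent for this particular functor, while the paper's approach illustrates that the result is a genuine instance of the coproduct machinery and requires no additional computation once that machinery is in place.
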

\begin{proof}
  This is a direct consequence of
  Propositions~\ref{prop:duality_coprod},~\ref{prop:duality_from_coprod},~\ref{prop:duality_coprod_equiv}
  and~\ref{prop:duality_constant_1} viewing the functor constant and
  equal to $A$ as a coproduct:
  \[
    A\simeq\coprod_{a\in A}\cate{1} \qedhere
  \]
\end{proof}

\AP We give a general "duality" result for the ""identity
functor"" $"\Id"$ acting as the identity on both sets and functions,
generalising~\cite[Example 5.29]{bbkk:coalgebraic-behavioral-metrics}.
An instance of "well-behaved" "modalities" for the "identity functor" was discussed in Example~\ref{ex:well-behaved}.
\begin{proposition}\label{prop:duality_identity}
  For any "well-behaved" "modality" $"\ev"\colon"\Vv"\to"\Vv"$ for the "identity functor",
  there is a "correspondence" $\corresp{"\Id"}{"\ev"}{\{"\ev"\}}$.
\end{proposition}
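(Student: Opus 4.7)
The plan is to prove the two inequalities between $\Id^{\downarrow}_{\tau}$ and $\Id^{\uparrow}_{\{\tau\}}$ separately. Since $\Id$ preserves weak pullbacks trivially, Proposition~\ref{prop:weak_duality} already gives $\Id^{\downarrow}_{\tau} \le \Id^{\uparrow}_{\{\tau\}}$. The real work is the reverse inequality.

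First I would simplify both sides. For the identity functor, the only coupling of $t_1,t_2 \in X$ is the pair $(t_1,t_2) \in X \times X$ itself, so
\[
  \Id^{\downarrow}_{\tau}(d)(t_1,t_2) \;=\; \tau(d(t_1,t_2)).
\]
The codensity side is $\bigwedge_{f \colon d \to \de} \de\bigl(\tau(f(t_1)), \tau(f(t_2))\bigr)$, and it suffices to exhibit a single $\Vv$-pseudometric morphism $f \colon d \to \de$ for which this value is at most $\tau(d(t_1,t_2))$.

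The candidate I would pick is $f_{t_1} \colon X \to \Vv$ defined by $f_{t_1}(x) = d(t_1,x)$. The key lemma to verify is that $f_{t_1}$ is a $\Vv$-pseudometric morphism from $d$ to $\de$, i.e.\ $d(x,y) \le \de(d(t_1,x), d(t_1,y))$ for all $x,y$. This is essentially the classical ``reverse triangle inequality'', and it follows from transitivity of $d$: from $d(t_1,x) \otimes d(x,y) \le d(t_1,y)$ we get $d(x,y) \le \hom{d(t_1,x), d(t_1,y)}$ by the adjunction defining $\hom{-,-}$, and symmetrically on the other side, hence $d(x,y) \le \de(d(t_1,x), d(t_1,y))$.

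Then I would compute the value of the codensity expression on this specific $f_{t_1}$. We have $f_{t_1}(t_1) = d(t_1,t_1) = \top$, and since $\tau$ is well-behaved, the condition $\Id \, i(\Id\{\top\}) = \tau^{-1}(\top)$ forces $\tau(\top) = \top$. Thus
\[
  \de\bigl(\tau(f_{t_1}(t_1)), \tau(f_{t_1}(t_2))\bigr) \;=\; \de\bigl(\top, \tau(d(t_1,t_2))\bigr).
\]
A short computation in the quantale, using $\hom{\top, z} = z$ and $\hom{z,\top} = \top$, yields $\de(\top, z) = z$ for every $z \in \Vv$, so this value equals $\tau(d(t_1,t_2))$. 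Taking the meet over all morphisms only makes things smaller, so
\[
  \Id^{\uparrow}_{\{\tau\}}(d)(t_1,t_2) \;\le\; \tau(d(t_1,t_2)) \;=\; \Id^{\downarrow}_{\tau}(d)(t_1,t_2),
\]
which combined with Proposition~\ref{prop:weak_duality} gives equality.

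I expect the only slightly delicate step to be the verification that $f_{t_1}$ is a pseudometric morphism in the quantale setting: one has to be careful with the reversal of order (e.g.\ for $\overline{\R}$) and to invoke the adjunction $x \otimes y \le z \iff x \le \hom{y,z}$ rather than concrete subtraction. Everything else is a direct unfolding of definitions together with the well-behavedness of $\tau$ (which is only used through $\tau(\top) = \top$).
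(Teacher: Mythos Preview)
Your proof is correct and follows essentially the same approach as the paper: invoke Proposition~\ref{prop:weak_duality} for one inequality, observe that the unique coupling gives $\Id^{\downarrow}_{\tau}(d)(t_1,t_2)=\tau(d(t_1,t_2))$, and exhibit the morphism $x\mapsto d(t_1,x)$ (with value $\top$ at $t_1$) to bound the codensity side. The only cosmetic difference is that the paper obtains this morphism by defining it as $\top$ on $\{t_1\}$ and applying Lemma~\ref{lem:extension_of_V-Rel_morphisms}, whereas you write it down directly and verify the morphism property via the adjunction; the resulting function is the same, and your treatment of the final step $\de(\top,z)=z$ is in fact more explicit than the paper's.
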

\begin{proof}
  By Proposition~\ref{prop:weak_duality} we need only prove
  \[
    \kl[coupling-based]{{"\Id"}^\downarrow_{"\ev"}}
    \ge
    \kl[codensity]{{"\Id"}^\uparrow_{"\ev"}}
  \]

  Let $X$ be a set, $d\colon X\times X\to"\Vv"$ a "$\Vv$-pseudometric",
  and $x,y\in X$. There is exactly one "coupling" of $x$ with $y$
  given by $(x,y)$. In particular it is \kl[optimal
  coupling]{optimal}:
  \begin{align*}
    \kl[coupling-based]{{"\Id"}^\downarrow_{"\ev"}}d(x,y)="\ev"(d(x,y))
  \end{align*}
  Furthermore,
  \begin{align*}
    \kl[codensity]{{"\Id"}^\uparrow_{"\ev"}}d(x,y)=\bigwedge_{f\colon d\to"\de"}
      "\de"("\ev"\circ f(x),"\ev"\circ f(y))
  \end{align*}

  Using Lemma~\ref{lem:extension_of_V-Rel_morphisms}, we
  define $f\colon d_{|\left\{ x\right\}}\to"\de"$ by $f(x)=\top$ and
  extend it to a "morphism" $f\colon d\to"\de"$. This gives $f(y)=d(x,y)$ so
  that $"\de"("\ev"\circ f(x),"\ev"\circ f(y))="\ev"(d(x,y))$. Hence
  \begin{align*}
    \kl[codensity]{{"\Id"}^\uparrow_{"\ev"}}d(x,y)\le
    "\ev"\circ d(x,y)=\kl[coupling-based]{{"\Id"}^\downarrow_{"\ev"}}d(x,y)
  \end{align*}
  ending the proof. Note that $f$ is \textit{a fortiori} an "optimal function".
\end{proof}

\subsection{Putting it together: "correspondences" for polynomial functors}
\label{sec:polynomial}

\AP Having seen "correspondences" for products, coproducts, the "identity functor",
and "constant functors", we combine these results to obtain a ``grammar''
of functors with "correspondences".

We start without products.
Using Propositions~\ref{prop:duality_coprod}
and~\ref{prop:duality_identity} as well as
Corollary~\ref{prop:duality_constant} gives "correspondences" for functors
in the following grammar of $\Set$ endofunctors:
\[
  F::="A"~\mid~"\Id"_{"\tau"}~\mid~\coprod F_i
\]
where $"\tau"$ indicates choices of "well-behaved" "modalities" in the
case of the "identity functor". Furthermore by
Propositions~\ref{prop:duality_coprod_equiv}
and~\ref{prop:duality_identity} and
Corollary~\ref{prop:duality_constant}, any "coupling-based" "lifting"
of a functor in this grammar is "equivalent" to a "correspondence"
given by our construction.

Observing that for a set $A$, $\coprod_{a\in A}F\simeq "A"\times F$, the grammar can equivalently
be defined as
\[
  F ::= "A"~\mid~"\Id"_{"\tau"}~\mid~"A"\times F~\mid~\coprod F_i
\]
where $"\tau"$ is a "well-behaved" "modality" for the "identity functor".
We can also note that these functors are exactly those isomorphic to
$"A"+"B"\times"\Id"$ for some sets $A$ and $B$.

\begin{example}[Discounting on streams]
	Consider the stream functor
  $X\mapsto A\times X$ for some alphabet $A$. The associated final
  coalgebra is the set of streams $A^{\omega}$. The results above tell
  us that to get a "correspondence" we need one "well-behaved"
  "modality" $"\ev"\colon"\Vv"\to"\Vv"$ per letter $a\in A$ for the
  "codensity lifting", through the isomorphism
  $"A"\times"\Id"\simeq\coprod_{a\in A}"\Id"$: we have a way of
  computing distance between streams in a specific way for each letter
  in $A$. When $"\Vv"=([0,M],\ge,+)$ for some real number $M\in(0,\infty]$,
  choosing $"\ev_a"(x)=c\cdot x$, the constant $c$ is then a discount
  factor allowing one to give more value to the first letters of a
  stream. We can choose a different constant $c_a$ for each letter $a$
  giving different values to different letters. We can also go
  further: if we do not want a linear discount we can use arbitrary
  "well-behaved" "modalities" for the "identity functor". On the
  "quantale" $([0,M],\ge,+)$, those are exactly sub-additive monotone
  maps mapping $0$ to $0$.
\end{example}

\AP We also get "correspondences" for the following grammar of
\emph{simple polynomial functors}:
\[
  F ::= "A"~\mid~"\Id"_{"\tau"}~\mid~"A"\times F~\mid~\coprod F_i~\mid~\prod F_i
\]
These functors all have "finite couplings". Following
Propositions~\ref{prop:duality_prod}
and~\ref{prop:duality_prod_infinite} we can either get finite or
arbitrary products in the grammar above by assuming $"\Vv"$ to be
distributive or meet-infinite distributive respectively.

\begin{example}
  \label{ex:DFA}
  We retrieve the "correspondence" of Section~\ref{sec:overview} as
  the particular case of the "quantale" $([0,M],\ge,+)$, the functor
  $2\times "\Id"^A$, and indexed "modalities" $"\tau"$ for the
  "identity functors" always mapping $x\in[0,M]$ to $c\cdot x$ for
  some $c\in[0,1)$. Replacing $[0,M]$ by the usual Boolean quantale
  $\2=\{\top,\bot\}$ and indexed "modalities" by the identity
  functions gives a "correspondence" "equivalent" to the usual
  relation lifting~\cite{DBLP:books/cu/J2016} associated to behavioural equivalence of DFA.
\end{example}

\section{"Dualities" for the "powerset" and "probability distribution functors"}
\label{sec:powerset_distribution}

We give "duality" results for the "powerset functor" mapping a set $X$
to the set of its subsets $"\Pp"X$, recovering a result from the
literature
\cite{bbkk:coalgebraic-behavioral-metrics,DBLP:journals/acs/HofmannN20}
reproduced here as Corollary~\ref{prop:duality_powerset_meet}, and
complementing it with a duality result where $\Vv$ is a total order
(Corollary~\ref{cor:p-total}).

\begin{remark}
  The non-empty powerset functor $\Pp_{\ge 1}$ mapping a set $X$ to
  the set of its non-empty subsets might have been chosen instead
  of $"\Pp"$. Note that $\Pp=\cate{1}+\Pp_{\ge 1}$. Up-to "equivalence",
  as a consequence
  of Propositions~\ref{prop:duality_coprod},
  \ref{prop:duality_from_coprod} and~\ref{prop:duality_constant_1} a
  "correspondence" for $"\Pp"$ is exactly given by one for
  $\Pp_{\ge 1}$ and conversely.
\end{remark}

\AP Given $T_1,T_2\in"\Pp"X$, $t_1\in T_1$, and a "$\Vv$-pseudometric"
$d\colon X\times X\to"\Vv"$, we write $"M_{t_1}"$ for the ""set of
maximal elements"" of $\{d(t_1,t_2)\mid t_2\in T_2\}$, and similarly
for elements $t_2\in T_2$. Expressing a "coupling-based lifting" as a
"codensity" one in a "correspondence" implies turning a join in
a meet. The following proposition gives a condition allowing one to
do that in the case of the "powerset functor" by making a "modality"
absorb a meet.
\begin{propositionrep}\label{prop:duality_powerset}
  Let $"\ev"\colon"\Pp" "\Vv"\to"\Vv"$ be a "well-behaved" "modality".
  If for every "$\Vv$-pseudometric" $d\colon X\times X\to"\Vv"$ and
  sets $T_1,T_2\in"\Pp"X$ the following equality holds
  \[
    "\ev"
    \left\{
      \bigvee "M_{t_1}"\mid t_1\in T_1
    \right\}
    \bigwedge
    "\ev"
    \left\{
      \bigvee "M_{t_2}"\mid t_2\in T_2
    \right\}
    =
    "\ev"\left(\left(
        \bigcup_{t_1\in T_1}"M_{t_1}"
      \right)
      \cup
      \left(
        \bigcup_{t_2\in T_2}"M_{t_2}"
      \right)
    \right)
  \]
  then there is a "duality" $\duality{"\Pp"}{"\ev"}{\{"\ev"\}}$.
\end{propositionrep}
\begin{proof}
  Thanks to Proposition~\ref{prop:weak_duality} we only need to prove
  that under the conditions of the statement we have:
  \[
    \kl[coupling-based]{{"\Pp"}^\downarrow_{"\ev"}}\ge\kl[codensity]{{"\Pp"}^\uparrow_{"\ev"}}
  \]

  Let $X$ be a set, $d\colon X\times X\to"\Vv"$ a "$\Vv$-pseudometric",
  $T_1,T_2\in"\Pp"(X)$.

  "Couplings" $T$ of $T_1$ and $T_2$ are exactly sets made of "couplings" of
  elements of $T_1$ with elements of $T_2$ such that for all
  $t\in T_1$ (resp. $t\in T_2$) there is a "coupling" of $t$ with some
  $t'\in T_2$ (resp. $t'\in T_1$) in $T$.

  There are two possibilities; either there are no "couplings" of
  $T_1$ with $T_2$, either there is at least one.

  When there are no "couplings" of $T_1$ with $T_2$, it can be for two
  reasons: either $T_1$ or $T_2$ is empty but not the other, or
  for some $t\in T_1$ (or $t\in T_2$), for all $t'\in T_2$ (resp.
  $t'\in T_1$) there are no "couplings" of $t$ with $t'$. This last case
  is not possible as $(t,t')$ is the one and only "coupling" of
  $t\in T_1$ with $t'\in T_2$.

  When, for example, $T_1$ is empty,
  \begin{align*}
    \kl[coupling-based]{{"\Pp"}^\downarrow_{"\ev"}}d(T_1,T_2)&= \bot
  \end{align*}
  With $f\colon d\to"\de"$ constant to $\bot$,
  $"\ev"\circ "\Pp"f(T_1)="\ev"\circ "\Pp" f(\emptyset)=\top$ (by the third condition of
  "well-behaved" "modality") and $"\ev"\circ "\Pp"f(T_2)="\ev"\circ "\Pp"f(\{\bot\})=\bot$ (by
  "monotonicity" of $"\ev"$) so that
  $"\de"("\ev"\circ "\Pp"f(T_1),"\ev"\circ "\Pp"f(T_2))=\bot$. Thus
  we also get
  $\kl[codensity]{{"\Pp"}^\uparrow_{"\ev"}}d(T_1,T_2)=\bot$.

  We now assume that neither $T_1$ nor $T_2$ is empty. We have:
  \begin{align*}
    \kl[coupling-based]{{"\Pp"}^\downarrow_{"\ev"}}d(T_1,T_2)
    &=\bigvee_{T\in"\Omega"(T_1,T_2)}"\ev"\circ"\Pp"d(T)
  \end{align*}
  Let us see what we can say of $"\ev"$ following it being "monotone":
  for every pair of predicates $p,q\colon X\to"\Vv"$, whenever
  $p\le q$ we have $"\ev"\circ "\Pp" p\le"\ev"\circ"\Pp" q$. This is
  equivalent to stating that whenever $A,B\subseteq"\Vv"$ are such
  that every maximal element of $A$ is below some maximal element of
  $B$ and every minimal element of $B$ is above some minimal element
  of $A$ then we have $"\ev"(A)\le"\ev"(B)$. Hence $"\ev"$ only depend
  on the maximal and minimal elements of its arguments.

  As a consequence of the conditions in the statement we will see that
  the following "coupling" is \kl[optimal coupling]{optimal}:
  \[
    T=
    \left( \bigcup_{t_1\in T_1}\{(t_1,t_2)\mid d(t_1,t_2)\text{ is maximal}\} \right)
    \bigcup
    \left( \bigcup_{t_1\in T_1}\{(t_1,t_2)\mid d(t_1,t_2)\text{ is maximal}\} \right)
  \]
  We have
  \begin{equation}
    \label{eq:opt-coup}
    "\ev"\circ"\Pp"d(T)= "\ev"\left(\left( \bigcup_{t_1\in
          T_1}"M_{t_1}" \right) \cup \left( \bigcup_{t_2\in
          T_2}"M_{t_2}" \right) \right)\le\kl[coupling-based]{{"\Pp"}^\downarrow_{"\ev"}}d(T_1,T_2)
  \end{equation}
  We prove that the "codensity lifting" reaches this value, which will
  prove the wanted inequality, by considering the right map
  $f\colon d\to"\de"$.

  Consider $f\colon d\to"\de"$ defined by $\top$ on $T_2$ and extended
  on $T_1$ using Lemma~\ref{lem:extension_of_V-Rel_morphisms}:
  $f(x)=\bigvee_{t_2\in T_2}d(x,t_2)$.

  We get the following:
  \begin{align*}
    "\de"("\ev"\circ"\Pp"f(T_1),"\ev"\circ"\Pp" f(T_2))
    &= "\de"\left( "\ev"\left\{ \bigvee_{t_2\in T_2}d(t_1,t_2)\mid t_1\in T_1 \right\},
      "\ev"\left\{ \top \right\} \right)\\
    &= "\ev"\left\{ \bigvee_{t_2\in T_2}d(t_1,t_2)\mid t_1\in T_1 \right\}\\
    &= "\ev"\left\{\bigvee "M_{t_1}"\mid t_1\in T_1\right\}
  \end{align*}
  By symmetry we get
  \[
    \kl[codensity]{{"\Pp"}^\uparrow_{"\ev"}}d(T_1,T_2)\le
    "\ev"
    \left\{
      \bigvee "M_{t_1}"\mid t_1\in T_1
    \right\}
    \bigwedge
    "\ev"
    \left\{
      \bigvee "M_{t_2}"\mid t_2\in T_2
    \right\}
  \]
  Combining the hypothesis and Equation~\ref{eq:opt-coup} gives the final result.
\end{proof}

\begin{corollaryrep}\label{cor:p-total}
  Let $"\ev"\colon"\Pp" "\Vv"\to"\Vv"$ be a "well-behaved" "modality".
  If the order on $"\Vv"$ is total then there is a "duality"
  $\duality{"\Pp"}{"\ev"}{\{"\ev"\}}$.
\end{corollaryrep}
\begin{proof}
  In this case each set $"M_t"$ is reduced to a singleton and maximal
  elements in the three sets
  $\left( \bigcup_{t_1\in T_1}"M_{t_1}" \right) \cup \left(
    \bigcup_{t_2\in T_2}"M_{t_2}" \right)$,
  $ \left\{ \bigvee "M_{t_1}"\mid t_1\in T_1 \right\}$, and
  $ \left\{ \bigvee "M_{t_2}"\mid t_2\in T_2 \right\}$ are actually
  maximum that are all $\max_{t_i\in T_i} d(t_1,t_2)$.

  The minimum that appears in
  $\left( \bigcup_{t_1\in T_1}"M_{t_1}" \right) \cup \left(
    \bigcup_{t_2\in T_2}"M_{t_2}" \right)$ must appear in one of the
  other two sets and as $"\tau"$ being "monotone" implies that it
  depends only on maximal and minimal elements (see the proof of
  Proposition~\ref{prop:duality_powerset}) we get the necessary equality
  \[
    "\ev"
    \left\{
      \bigvee "M_{t_1}"\mid t_1\in T_1
    \right\}
    \bigwedge
    "\ev"
    \left\{
      \bigvee "M_{t_2}"\mid t_2\in T_2
    \right\}
    =
    "\ev"\left(\left(
        \bigcup_{t_1\in T_1}"M_{t_1}"
      \right)
      \cup
      \left(
        \bigcup_{t_2\in T_2}"M_{t_2}"
      \right)
    \right)
  \]
  to apply Proposition~\ref{prop:duality_powerset}.
\end{proof}

\begin{corollaryrep}\label{prop:duality_powerset_meet}
  We assume $"\Vv"$ to be completely distributive.
  We have "duality" for the "coupling-based" and "codensity liftings"
  of the "powerset functor", both along the meet "modality" which maps
  a subset of $"\Vv"$ to the meet of its elements.
\end{corollaryrep}
\begin{proof}
  Being completely distributive with the meet "modality" directly give
  the condition of Proposition~\ref{prop:duality_powerset}.
\end{proof}

For the "probability distribution functor" we fix a constant
$M\in(0,\infty]$ and use the "quantale" $([0,M],\ge,+)$. The
following (Kantorovich-Rubinstein) "duality" result is well-known (see, e.g.,~\cite[Theorem
5.10]{villani2009optimal} for a general proof in the continuous case
and~\cite[Appendix A]{jacobs2024drawingdistance} for the discrete
finite case):
\begin{proposition}\label{prop:duality_distr}
  \AP There is a "duality" $\duality{"\Dd"}{"\E"}{\{"\E"\}}$ with $"\Dd"$
  the "finite probability distribution functor" and $"\E"$ giving the
  "expectation" of probability distributions.
\end{proposition}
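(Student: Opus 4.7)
The plan is to reduce to the classical Kantorovich--Rubinstein duality theorem in its discrete finite form, after checking that the abstract liftings specialise to the two standard expressions.

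First, I would unfold the definitions on the quantale $([0,M], \ge, +)$, where joins are infima and meets are suprema in the usual order and $\de(r,s)=|r-s|$. The two liftings of $\Dd$ along $\E$ then become exactly Equations~(1) and~(2) from the introduction:
\[
\Dd^\downarrow_{\E}(d)(\mu,\nu) = \inf_{\sigma \in \Omega(\mu,\nu)} \sum_{x,y \in X} d(x,y)\,\sigma(x,y),
\]
and
\[
\Dd^\uparrow_{\{\E\}}(d)(\mu,\nu) = \sup_{f} \left| \sum_{x \in X} f(x)\mu(x) - \sum_{x \in X} f(x)\nu(x) \right|,
\]
where $f$ ranges over pseudometric morphisms $d \to \de$, i.e.\ non-expansive maps $X \to [0,M]$. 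In the finite-support case both extrema are over compact domains and hence attained; in the vocabulary of Section~\ref{sec:kr-tools}, $\Dd$ then has all optimal couplings and admits an optimal function.

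By Proposition~\ref{prop:weak_duality} one inequality is free, so only $\Dd^\downarrow_{\E} \ge \Dd^\uparrow_{\{\E\}}$ requires work. Restricting $X$ to $\mathrm{supp}(\mu) \cup \mathrm{supp}(\nu)$ turns both sides into a matching pair of finite-dimensional linear programs. The primal (coupling side) is feasible --- the product $\mu \otimes \nu$ is always a coupling --- and bounded below by zero, so strong LP duality applies. The raw LP dual optimises over pairs of potentials $(f,g)$ satisfying $f(x) - g(y) \le d(x,y)$; the classical argument then replaces such a pair by a single potential which, in the pseudometric cost setting, is forced to be non-expansive, matching exactly the formula for $\Dd^\uparrow_{\{\E\}}$.

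The main obstacle is this last collapsing step: identifying the LP-dual optimum with a supremum over non-expansive functions rather than over two independent potentials. This is the genuinely non-trivial piece of Kantorovich--Rubinstein and is standard in optimal transport. Rather than reprove it, I would cite the classical discrete Kantorovich--Rubinstein theorem --- for instance \cite[Particular case 5.16]{villani2009optimal} or the explicit finite-support treatment in \cite[Appendix A]{jacobs2024drawingdistance} --- and all remaining work is bookkeeping to match the quantale-indexed formulation with the classical statement.
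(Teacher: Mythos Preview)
Your proposal is correct and takes essentially the same approach as the paper: the paper gives no proof for this proposition and simply cites the classical Kantorovich--Rubinstein duality (Villani~\cite[Theorem~5.10]{villani2009optimal} and the discrete finite case in~\cite[Appendix~A]{jacobs2024drawingdistance}), which is exactly what you do after unfolding the definitions. Your added bookkeeping---checking that the abstract liftings on the quantale $([0,M],\ge,+)$ specialise to the standard coupling and non-expansive-function formulations---is useful and more explicit than the paper, but the core argument is the same deferral to the literature.
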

We extend this result slightly to allow post-composition by
"well-behaved" "modalities":

\begin{propositionrep}\label{prop:duality_dd_extended}
  Let $"\ev"\colon[0,M]\to[0,M]$ be a "well-behaved"
  "modality". If $"\ev"$ is additive, meaning that for all $x,y\in[0,M]$,
  $\tau(x+y)=\tau(x)+\tau(y)$, then there is
  a "duality" $\duality{"\Dd"}{"\ev"\circ"\E"}{\{"\ev"\circ"\E"\}}$.
\end{propositionrep}
\begin{proof}
  It is known that infima and suprema in the "duality" of
  Proposition~\ref{prop:duality_distr} are actually minima and
  maxima: there are both "optimal couplings" for the "coupling-based lifting" and "optimal functions" for the "codensity lifting" of
  $\duality{"\Dd"}{"\E"}{\{"\E"\}}$. Hence the "duality" can be
  written: for all probability distributions $P_1,P_2$ on sets $X$:
  \[
    "\E"_Pd=
    \left|
      "\E"_{P_1}f-"\E"_{P_2}f
    \right|
  \]
  for some "optimal coupling" $P$ of $P_1$ and $P_2$ and some "optimal non-expansive function" $f\colon d\to|\_-\_|$. Post-composing by
  $"\ev"$ obviously gives the expected result.
\end{proof}

\section{"Correspondences" for grammars of functors}
\label{sec:grammar}

\AP In this section we combine the results from the previous two sections, allowing
the construction of "correspondences" for certain classes of functors including "powerset", "distribution",
"identity" and "constant functors", as well as products and coproducts thereof.

Whenever $"\Vv"$ is totally ordered and completely distributive, using
results from Sections~\ref{sec:correspondence_abstract}
and~\ref{sec:powerset_distribution} we can construct "correspondences"
for the following grammar of functors:
\[
  F ::= "A"~\mid~"\Id"_{"\tau"}~\mid~"\Pp"_{"\tau"}~\mid~\prod F_i~\mid~\coprod F_i
\]
where the indices $"\tau"$ are "well-behaved" "modalities" for either
the "identity" or "powerset functors".

Noting that
$"\Pp"\left( \coprod_{i\in I} F_i\right)\cong \prod_{i\in
  I}"\Pp"(F_i)$, this grammar can be reformulated as follows:
\begin{align*}
  G ::=&~"A"~\mid~"\Id"_{"\tau"}~\mid~"A"\times G~\mid~\coprod G\\
  F ::=&~"A"~\mid~"\Id"_{"\tau"}~\mid~"\Pp"\circ G~\mid~\prod F_i~\mid~\coprod F_i
\end{align*}
where the indices of the form $"\tau"$ indicate a choice of possible
"well-behaved" "modalities" for the "identity functor" when appearing
in $F$ and for the "powerset functor" when appearing in $G$.

\begin{example}
  In a similar fashion as in Example~\ref{ex:DFA}, considering the
  completely distributive "quantale" $([0,M],\ge,+)$, the functor
  $2\times"\Pp"^A$ which is associated to \emph{non-deterministic
    automata}, and indexed "modalities" for the "powerset functors" to
  be all mapping finite subsets $V\subset[0,M]$ to $c\cdot \max(V)$
  for a fixed $c\in [0,1)$, we get a "lifting" associated to a
  "shortest-distinguishing-word-distance" for non-deterministic
  systems as is detailed in Section~\ref{sec:overview} for DFA.
\end{example}

\begin{remark}
  This grammar with finite products only defines a fragment of finitary Kripke polynomial
  functors as defined in~\cite{DBLP:books/cu/J2016} which correspond to replacing $"\Pp"(G)$ by $"\Pp"(F)$, and having
  only finite products but including arbitrary exponents.
\end{remark}

\begin{remark}
  Using the identity "modality" for the "identity functors" and
  the meet "modality" for the "powerset functors", the "modalities"
  given by this construction for the "coupling-based" "liftings" are exactly
  the \emph{canonical evaluation maps} as defined in~\cite{bkp:up-to-behavioural-metrics-fibrations-journal}.
\end{remark}

In the particular case of $"\Vv"=[0,M]$ we can extend the
above grammar with the "finite probability distribution functor"
and get "correspondence" results for the following:
\[
  F::="A"~\mid~"\Id"_{"\tau"}~\mid~"\Pp"_{"\tau"}~\mid~"\Dd"_{"\tau"}~\mid~\prod F_i~\mid~\coprod F_i
\]
where the indices $"\tau"$ indicate choices of "well-behaved" "modalities" for
either the "identity", "powerset", or "distribution functors",
furthermore of a "modality" of the form $"\tau"\circ"\E"$ for the latter as described in
Proposition~\ref{prop:duality_dd_extended}.

Observe that
$"\Dd"\left(\coprod_{i\in I}F_i\right)\cong"\Dd"(I)\times\prod_{i\in
  I}"\Dd"(F_i)$ so that the grammar above can be expressed as follows:
\begin{align*}
  G::=&~"A"~\mid~"\Id"_{"\tau"}~\mid~"A"\times G~\mid~\coprod G\\
  F::=&~"A"~\mid~"\Id"_{"\tau"}~\mid~"\Pp"\circ G~\mid"\Dd"\circ G~\mid~\prod F_i~\mid~\coprod F_i
\end{align*}
where the indices $"\tau"$ indicate choices of possible
"well-behaved" "modalities" for the "identity functor" in $"\Id"_{"\tau"}$
and for the "powerset" or "distribution functors" when appearing in $G$ for
$"\Pp"\circ G$ and $"\Dd"\circ G$ respectively.

\begin{example}
  We fix $M=1$ so that $"\Vv"=[0,1]$. Consider the functor $"\Dd"(\cate{1}+"\Id")^A$ for a set
  of labels $A$ and $\cate{1}$ a singleton as in
  Proposition~\ref{prop:duality_constant_1}. It is associated to \emph{labelled
  Markov processes}. To get a "correspondence" we need one "modality"
  $"\tau_a"\colon"\Dd" [0,1]\to[0,1]$ per label $a\in A$. By
  Proposition~\ref{prop:duality_dd_extended} we can take them all to
  be $"\tau_a"(\mu)=c\cdot"\E"(\mu)$ with $c\in[0,1)$ a fixed constant.
  The resulting "correspondence" is associated to the usual
  metrics for labelled Markov processes as introduced in~\cite{dgjp:metrics-labelled-markov}. This 
  is a direct consequence of~\cite[Proposition 29]{bw:behavioural-pseudometric}
  which expresses the associated "lifting" for one label
  in a ``codensity-like'' manner.
\end{example}

\section{A counter-example: "conditional transition systems"}
\label{sec:cts}

We now highlight an example where a "correspondence" can \emph{not} be provided by our construction:
some "codensity lifting" may not be obtained as a "coupling-based lifting".

\AP A ""conditional transition system"" ("CTS") over an alphabet $A$
and a partially ordered set of conditions $(\Ll,\le)$ is a tuple
$(X,A,\Ll,\delta)$ where $X$ is a set of states and the transition map
$\delta\colon X\times A \to ((\Ll,\le)\to("\Pp"(X),\supseteq))$
associates to each pair $(x,a)\in X\times A$ a monotone function
$\delta_{x,a}$ from the poset of conditions to $"\Pp"(X)$.
For the associated functor to live in $\Set$ we restrict to the "CTSs"
for which $\Ll$ is trivially ordered by $\forall x,y\in\Ll, x\le y$.
Hence any map $\delta_{x,a}\colon\Ll\to"\Pp"(X)$ is monotone, and "CTSs" are
exactly coalgebras for the functor $F="\Pp"(-)^{A\times\Ll}$ (see
\cite{DBLP:journals/lmcs/Beohar0k0W18}).

\AP The associated bisimulations, not detailed here, are related to a
"lifting"
$""\overline{F}""\colon"\Pp(\Ll)\cate{-PMet}"\to"\Pp(\Ll)\cate{-PMet}"$
of $F$ defined by
\begin{align*}
  "\overline{F}"r(T_1,T_2)=\left\{l\in\Ll\mid\right.
  \forall a\in A,&\forall (x,y)\in T_1(l,a)\times T_2(l,a),\\
  &\exists (x',y')\in T_1(l,a)\times T_2(l,a),\\
  &~~\left.(l\in r(x,y'))\text{ and }(l\in r(x',y))\right\}
\end{align*}
on objects and $"\overline{F}"f=Ff$ on maps.

\begin{propositionrep}
  For $a\in A$ consider a "modality" $"\ev_a"\colon "\Pp"("\Pp"(\Ll))^{\Ll\times A}\to"\Pp"(\Ll)$, defined by :
  \[
    \forall f \colon \Ll\times A\to"\Pp"("\Pp"(\Ll)),  "\ev_a"(f)=\left\{ l\in\Ll\mid l\in \cap f(l,a) \right\}
  \]
  The "codensity lifting" defined by the family $("\ev_a")_{a\in A}$
  of "modalities" is equal to $"\overline{F}"$.
\end{propositionrep}
\begin{proof}
  Take $d\colon X\times X\to\Pp(\Ll)$ and
  $T_1,T_2\in\Pp(X)^{\Ll\times A}$. We want to prove the
  following:
  \[
    "\overline{F}"d(T_1,T_2)=\kl[codensity]{F^\uparrow_{\{"\ev_a"\}}}d(T_1,T_2)
  \]
  Explicitly we want
  \begin{align*}
    \left\{ l\in\Ll\mid\forall a\in A,\right.
      &\forall x,y\in T_1(l,a)\times T_2(l,a),\\
      &\exists x',y'\in T_1(l,a)\times T_2(l,a),\\
      &~~\left.(l\in d(x,y'))\text{ and }(l\in d(x',y))\right\}
  \end{align*}
  to be exactly the same as
  \[
    \underset{a\in A,~f\colon d\to"\de"}{\bigwedge}"\de"("\ev_a"\circ
    Ff(T_1),"\ev_a"\circ Ff(T_2))
  \]
  Suppose that for $l\in\Ll$ there exists $a\in A$ and
  $f\colon r\to"\de"$ such that
  $l\notin "\de"("\ev_a"\circ Ff(T_1),"\ev_a"\circ Ff(T_2))$. By
  definition of $"\de"$, we have
  $"\de"(X,Y)=(X\cap Y)\cup(\Ll\backslash(X\cup Y))$. Up-to to a permutation
  of $T_1$ and $T_2$ this gives $l\in"\ev_a"\circ Ff(T_1)$ and
  $l\notin"\ev_a"\circ Ff(T_2)$:
  \begin{align*}
    \begin{cases}
      l\in"\ev_a"\circ Ff(T_1)\\
      l\notin"\ev_a"\circ Ff(T_2)
    \end{cases}\Leftrightarrow&
    \begin{cases}
      l\in\left\{ c\in\Ll\mid c\in\cap Ff[T_1(c,a)] \right\}\\
      l\notin\left\{ c\in\Ll\mid c\in\cap Ff[T_2(c,a)] \right\}
    \end{cases}\\
    \Leftrightarrow&
    \begin{cases}
      l\in\cap Ff[T_1(l,a)]\\
      l\notin\cap Ff[T_2(l,a)]
    \end{cases}\\
    \Leftrightarrow&
    \begin{cases}
      \forall x\in T_1(l,a),~l\in f(x)\\
      \exists y\in T_2(l,a),~l\notin f(y)
    \end{cases}\\
    \Leftrightarrow&
    \exists y\in T_2(l,a),~\forall x\in T_1(l,a),~l\notin"\de"(fx,fy)
  \end{align*}
  As $f$ is a "morphism" of "pseudometrics", $d\le"\de"\circ(f\times f)$ and we get
  that $l\notin d(x,y)$ for all $x\in T_1(l,a)$ and some
  $y\in T_2(l,a)$.

  Conversely suppose that there exists $y\in T_2(l,a)$ such that for
  all $x\in T_1(l,a)$ we have $l\notin d(x,y)$. Consider
  $f\colon d\to"\de"$ defined by $\top$ on every elements of
  $T_1(l,a)$ and extended using
  Lemma~\ref{lem:extension_of_V-Rel_morphisms}. We know that
  $l\in f(x)$ for all $x\in T_1(l,a)$. Using the definition of $f$
  given by Lemma~\ref{lem:extension_of_V-Rel_morphisms} we know that
  \[
    f(y)=\bigcup\left\{ d(x,y)\mid x\in T_1(l,a) \right\}
  \]
  Because for all $x\in T_1(l,a)$, $l\notin d(x,y)$ we get that
  $l\notin f(y)$. Thus we retrieve the condition at the end of the
  sequence of equivalences above and we know that
  $l\notin\kl[codensity]{F^\uparrow_{\{"\ev_a"\}}} d(T_1,T_2)$.

  We have proven the following: $l\notin\kl[codensity]{F^\uparrow_{\{\ev_a\}}} d(T_1,T_2)$ if
  and only if there exits $a\in A$, $f\colon d\to"\de"$, and
  $y\in T_i(l,a)$ such that for all $x\in T_j(l,a)$ $l\notin r(x,y)$
  (where $i\neq j$ are in $\left\{ 1,2 \right\}$).

  This is exactly the condition for $l\notin "\overline{F}"r(T_1,T_2)$ so that
  $\kl[codensity]{F^\uparrow_{\{"\ev_a"\}}} r(T_1,T_2)="\overline{F}"r(T_1,T_2)$ ending the proof.
\end{proof}

\begin{proposition}
  If $A$ is non-empty and $|\Ll|\ge 2$ then there is no "well-behaved" "modality"
  ${"\ev"\colon"\Pp"("\Pp"(\Ll))^{\Ll\times A}\to"\Pp"(\Ll)}$ such that the
  resulting "coupling-based lifting" is equal to $"\overline{F}"$.
\end{proposition}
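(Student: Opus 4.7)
The plan is to exhibit a concrete pseudometric $d$ and a pair $(T_1, T_2) \in FX \times FX$ on which $\overline{F}d(T_1, T_2)$ and $F^{\downarrow}_\tau d(T_1, T_2)$ must disagree, independently of the choice of $\tau$. The approach exploits a structural asymmetry between the two liftings concerning empty components at some coordinate $(l, a)$. A coupling of $T_1, T_2 \in \Pp(X)^{\Ll \times A}$ is an element $t$ with $\pi_i \circ t = T_i$ pointwise, so $t(l, a) = \emptyset$ iff $T_1(l, a) = T_2(l, a) = \emptyset$. In particular, if at some coordinate $(l, a)$ exactly one of the $T_i(l, a)$ is empty, then $\Omega(T_1, T_2)$ is empty, and the coupling-based lifting collapses: $F^{\downarrow}_\tau d(T_1, T_2) = \bigvee \emptyset = \bot = \emptyset$ regardless of $\tau$. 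By contrast, $\overline{F}d(T_1, T_2)$ universally quantifies over $T_1(l, a) \times T_2(l, a)$ and is vacuously satisfied whenever this product is empty, so it can be nonempty under the same conditions.

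To construct the witness, I would use $|\Ll| \geq 2$ to pick distinct labels $l_1, l_2 \in \Ll$ and $A \neq \emptyset$ to pick some $a \in A$. Take $X = \{x, y\}$ with $x \neq y$, and define $T_1, T_2$ to be supported only at $a$, with $T_1(l_1, a) = \{x\}$, $T_1(l_2, a) = \emptyset$, $T_2(l_1, a) = T_2(l_2, a) = \{y\}$, and all other values empty. For the pseudometric, set $d(x, x) = d(y, y) = \Ll$ and $d(x, y) = d(y, x) = \{l_1\}$. A routine check confirms that $d$ is a $\Pp(\Ll)$-pseudometric: reflexivity and symmetry are immediate, and transitivity reduces to $\{l_1\} \cap \Ll \subseteq \{l_1\}$.

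With this data, the mismatch $T_1(l_2, a) = \emptyset \neq \{y\} = T_2(l_2, a)$ rules out any coupling of $T_1$ with $T_2$, so $F^{\downarrow}_\tau d(T_1, T_2) = \emptyset$ for every $\tau$. Yet $T_1(l_2, a) \times T_2(l_2, a) = \emptyset$ makes the defining clause of $\overline{F}$ vacuously true at $l = l_2$, giving $l_2 \in \overline{F}d(T_1, T_2)$. The two liftings therefore disagree on $(T_1, T_2)$, contradicting the existence of any $\tau$ with $F^{\downarrow}_\tau = \overline{F}$. There is no real obstacle here: the argument never uses well-behavedness of $\tau$, and the hypotheses $|\Ll| \geq 2$ and $A \neq \emptyset$ serve only to provide the coordinate along which to engineer the structural mismatch.
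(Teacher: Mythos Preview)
Your argument is correct and follows essentially the same route as the paper: both exploit that an empty component at a single coordinate $(l,a)$ kills all couplings (forcing $F^\downarrow_\tau d(T_1,T_2)=\emptyset$ for any $\tau$) while the universal quantifier in $\overline F$ is vacuously satisfied there. The paper's witness is marginally simpler---it takes $d$ constant to $\top=\Ll$ and $T_1,T_2$ valued in $\{\emptyset,X\}$---so your nontrivial choice $d(x,y)=\{l_1\}$ is not needed, but it does no harm.
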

\begin{proof}
  Let $a\in A$ be some letter and $c_1,c_2\in\Ll$ be two distinct conditions.
  Consider $d\colon X\times X\to\Pp(\Ll)$ constant to $\top$, that is, $\Ll$.
  It is obviously a "$\Pp(\Ll)$-pseudometric". Finally we consider
  $T_1,T_2\in\Pp(X)^{\Ll\times A}$ such that $T_1(c_1,a)=\emptyset$ and 
  $T_1(c_2,a)=X$, and $T_2(c_1,a)=T_2(c_2,a)=X$. Directly, because
  $T_1(c_1,a)=\emptyset$ but $T_2(c_1,a)\neq\emptyset$ there are no
  "couplings" of $T_1$ and $T_2$, and for all "well-behaved"
  "modalities",
  $\kl[coupling-based]{F^\downarrow_{"\ev"}}d(T_1,T_2)=\emptyset$. On
  the other hand,
  \[
    \forall (x,y)\in T_1(c_2,a)\times T_2(c_2,a),~\exists (x',y')\in T_1(c_2,a)\times
    T_2(c_2,a),~(l\in d(x,y'))\text{ and } (l\in d(x',y))
  \]
  Hence $c_2\in"\overline{F}"d(T_1,T_2)\neq\emptyset$.
\end{proof}

Hence, "conditional transition systems" give a limitation to the
"correspondence" results provided above. Note however that the
associated functor has a "correspondence" induced by a grammar above.
The reason it is not equivalent to the "lifting" $"\overline{F}"$ is that
it considers the set $\Ll\times A$ as being a set of actions, while we
want conditions in $\Ll$ to be independent of one another and compared
separately. Indeed, conditions are fixed throughout the execution of a
"CTS" whereas actions may change at each step.

\section{Conclusions and future work}

We have studied "correspondences" between "coupling-based" and "codensity liftings",
moving from the classical Kantorovich-Rubinstein duality for distributions
to different types of endofunctors on $\Set$. In particular, we have shown that
such types of "correspondences" are closed under coproducts and
products and used that to provide explicit "correspondences" for
several grammars of functors, including polynomial functors
with the possibility of extending them using the "powerset" and the "probability
distribution functors". This instantiates
to usual "liftings" of functors associated to (non)deterministic
finite automata, or labelled Markov processes, both with discount.

In~\cite{DBLP:conf/fossacs/GoncharovHNSW23} the authors have shown
that on an abstract level all "coupling-based liftings" are in fact
"codensity liftings", implying that all "coupling-based liftings" have
some associated "correspondences". Section~\ref{sec:cts} shows that
the converse does not hold by providing an example of "lifting" that
arises as a "codensity lifting" but not as a "coupling-based" one. Our
work proves that "correspondences" for coproducts of functors are
characterised by ones for the components of the coproducts, but gives
no such result for products of functors. For example we could consider
the "coupling-based lifting" of the \emph{diagonal functor}
$\Delta\colon X\mapsto X\times X$ along the "well-behaved" "modality"
$"\otimes"\colon "\Vv"\times"\Vv"\to"\Vv"$. The question of whether it arises
from "coupling-based liftings" of
the identity functors, and the problem of relating it to a "codensity lifting" in a
"correspondence" remain open. Note however that if "coupling-based
liftings" require their "modalities" to be "well-behaved", "codensity
liftings", as defined in~\cite{DBLP:conf/fossacs/GoncharovHNSW23} need
no such assumption and can be defined along sets of any "modalities".
Hence it is possible that to see the "coupling-based lifting"
of $\Delta$ along $"\otimes"$ as a "codensity lifting" it is
needed to consider general "modalities" for the latter.

In Section~\ref{sec:cts} we have seen that a certain "lifting" for "conditional transition systems"
can not be obtained as a "coupling-based lifting". This leads to the question of whether
the definition of "coupling-based liftings" could be
extended somehow to encompass this example and obtain a "correspondence" with the existing "codensity lifting".

\bibliography{bibliography.bib}

\end{document}
